\definecolor{darkspringgreen}{rgb}{0.09, 0.45, 0.27}
\theoremstyle{plain}
\newtheorem{Thm}{Theorem}[section]
\newtheorem{Thm*}{Theorem}[section]
\newtheorem{Thm'}[Thm]{"Theorem"}
\newtheorem{Cor}[Thm]{Corollary}
\newtheorem{Conj}[Thm]{Conjecture}
\newtheorem{Prop}[Thm]{Proposition}
\newtheorem{Lem}[Thm]{Lemma}
\theoremstyle{definition}
\newtheorem{Emp}[Thm]{}
\numberwithin{equation}{section}
\newcommand{\nc}{\newcommand}
\nc{\lm}{\lambda}
\newcommand{\ov}{\overline}
\newcommand{\B}[1]{\mathbb#1}
\newcommand{\cal}[1]{\mathcal{#1}}
\newcommand{\C}[1]{\cal#1}
\newcommand{\isom}{\overset {\thicksim}{\to}}
\newcommand{\si}[2]{\text{$\Sigma^{#1}_{#2}$}}
\newcommand{\lra}{\longrightarrow}
\newcommand{\hra}{\hookrightarrow}
\newcommand{\wt}{\widetilde}
\newcommand{\g}{\mathfrak{g}}
\newcommand{\gm}{\gamma}
\newcommand{\dt}{\delta}
\newcommand{\La}{\Lambda}
\newcommand{\bs}{\backslash}
\newcommand{\un}[1]{\underline{#1}}
\newcommand{\al}{\alpha}
\newcommand{\la}{\lambda}
\newcommand{\form}[1]{(\ref{Eq:#1})}
\newcommand{\rl}[1]{Lemma~\ref{L:#1}}
\newcommand{\rp}[1]{Proposition~\ref{P:#1}}
\newcommand{\re}[1]{\ref{E:#1}}
\newcommand{\rco}[1]{Corollary~\ref{C:#1}}
\newcommand{\rt}[1] {Theorem~\ref{T:#1}}
\newcommand{\fin}{{\operatorname{fin}}}
\newcommand{\pr}{\operatorname{pr}}
\newcommand{\Aut}{\operatorname{Aut}}
\newcommand{\Ad}{\operatorname{Ad}}
\newcommand{\red}{{\operatorname{red}}}
\newcommand{\ev}{\operatorname{ev}}
\newcommand{\Fl}{\mathcal{F}\ell}
\newcommand{\Spr}{\operatorname{Spr}}
\newcommand{\St}{\operatorname{St}}
\newcommand{\codim}{\operatorname{codim}}
\newcommand{\rss}{{\operatorname{rss}}} \newcommand{\tn}{{\operatorname{tn}}}
\renewcommand{\si}{\sigma}
\newcommand{\CN}{{\mathcal{N}}}
\newcommand{\ft}{\frak{t}}
\newcommand{\fc}{\frak{c}}
\newcommand{\br}{\mathbf{r}}
\newcommand{\fk}{\mathsf k}
\newcommand{\fP}{\frak{P}}
\begin{document}

\title[$S$-cells in affine Weyl groups]
{Lusztig conjectures on $S$-cells in affine Weyl groups}

\author{Michael Finkelberg}
\address{Department of Mathematics\\
National Research University Higher School of Economics\\
Russian Federation, Usacheva st.\ 6, 119048, Moscow\\
\newline Skolkovo Institue of Science and Technology\\
\newline Institute for Information Transmission Problems of RAS}
\email{fnklberg@gmail.com}
\date{\today}

\author{David Kazhdan}
\address{Institute of Mathematics\\
The Hebrew University of Jerusalem\\
Givat-Ram, Jerusalem,  91904\\
Israel} \email{kazhdan@math.huji.ac.il}

\author{Yakov Varshavsky}
\address{Institute of Mathematics\\
The Hebrew University of Jerusalem\\
Givat-Ram, Jerusalem,  91904\\
Israel} \email{vyakov@math.huji.ac.il}

\dedicatory{To George Lusztig on his 75th birthday with admiration}

\begin{abstract}
We apply the dimension theory developed in~\cite{BKV} to establish some of Lusztig's conjectures~\cite{Lu} on $S$-cells in affine Weyl groups.
\end{abstract}
\maketitle

\tableofcontents

\section*{Introduction}

\begin{Emp} \label{E:scells}
  {\bf $S$-cells.}
  Following~\cite[0.1]{Lu}, recall that for a connected complex reductive group $G$,
  its Weyl group $W_\fin$ is partitioned into
  $S$-cells:\footnote{$S$ stands for Steinberg, Spaltenstein and Springer.}
  $W_\fin=\bigsqcup_{{\mathbb O}\in{\mathfrak U}}W_{\mathbb O}$ parameterized by the set $\mathfrak U$
  of nilpotent $G$-orbits in ${\mathfrak g}=\operatorname{Lie}G$ as follows.
  Given $w\in W_\fin$, we take Borel subalgebras ${\mathfrak b},{\mathfrak b'}\subset{\mathfrak g}$
  in relative position $w$ and consider the intersection
  ${\mathfrak n}_{\mathfrak b}\cap{\mathfrak n}_{{\mathfrak b}'}$ of their nilpotent radicals.
  There is a unique nilpotent orbit ${\mathbb O}$ such that the intersection
  ${\mathbb O}\cap{\mathfrak n}_{\mathfrak b}\cap{\mathfrak n}_{{\mathfrak b}'}$ is open in
  ${\mathfrak n}_{\mathfrak b}\cap{\mathfrak n}_{{\mathfrak b}'}$. By definition, $w\in W_{\mathbb O}$.

  For any nilpotent orbit $\mathbb O$, the $S$-cell $W_{\mathbb O}$ is the image of a map
  $\varpi\colon [\operatorname{Spr}_a]\times[\operatorname{Spr}_a]\to W_\fin$ defined as follows:
 let $a\in {\mathbb O}$ be an arbitrary element, let $\operatorname{Spr}_a$ be the Springer fiber over $a$, that is, the space of Borel subalgebras
 containing $a$, let $[\operatorname{Spr}_a]$ be the set of the irreducible components of
 $\operatorname{Spr}_a$, and finally
  $\varpi(X,X')\in W_{\fin}$ is the relative position of generic points of
  irreducible components $X,X'\in[\Spr_a]$.

  Indeed, recall the
  Springer resolution $\mu\colon T^*{\mathcal B}=\widetilde{\mathcal U}\to{\mathcal U}$,
  where ${\mathcal B}$ is the flag variety of $G$, and ${\mathcal U}\subset{\mathfrak g}$ is
  the nilpotent cone. It is known that $\mu$ is strictly semismall, i.e.\ for any nilpotent orbit
  ${\mathbb O}\subset{\mathcal U}$, its codimension in $\mathcal U$ is exactly twice the dimension
  of the Springer fiber $\operatorname{Spr}_a=\mu^{-1}(a)$ for any $a\in{\mathbb O}$. In other
  words, all the nilpotent orbits are the relevant strata~\cite[1.1]{BM} of the Springer morphism $\mu$.
  The strict semi-smallness of $\mu$ implies that the Steinberg variety of triples
  $\St_G:=\widetilde{\mathcal U}\times_{\mathcal U}\widetilde{\mathcal U}$ is equidimensional
  of dimension $2\dim{\mathcal B}$. On the other hand, the irreducible components of $\St_G$ are nothing
  but the conormal bundles $T^*_{{\mathbb O}_w}({\mathcal B}\times{\mathcal B})$ to orbits of $G$ acting diagonally on
  ${\mathcal B}\times{\mathcal B}$ (such orbits are pairs of Borel subalgebras in relative
  position $w\in W_\fin$). Thus both $W_{\mathbb O}$ and
  $\varpi([\operatorname{Spr}_a]\times[\operatorname{Spr}_a])$ parameterize the set of irreducible
  components of $\St_G$ whose generic points lie above the generic point of ${\mathbb O}$.
\end{Emp}

\begin{Emp}
  {\bf Affine $S$-cells.}
  In case $G$ is almost simple simply connected, Lusztig~\cite{Lu} suggested two definitions of
  a partition of the affine Weyl group $W=\bigsqcup_{w\in W_\fin/\!\Ad}W_w$ into affine
  $S$-cells parameterized by the conjugacy classes of $W_\fin$, and conjectured the equivalence of
  the two definitions.

The goal of this work is to prove a weak form of Lusztig's conjecture replacing the argument
of~\re{scells} by its affine analog. In the affine case, the role of the nilpotent cone ${\mathcal U}$ is played by the
  space of topologically nilpotent elements  ${\mathcal N}\subset L{\mathfrak g}={\mathfrak g}(\!(t)\!)$ in the loop Lie algebra of $\mathfrak g$, while the role of the partition
  ${\mathcal U}=\bigsqcup_{{\mathbb O}\in{\mathfrak U}}{\mathbb O}$ is played
  by the Goresky--Kottwitz--MacPherson stratification~\cite{GKM} of ${\mathcal N}$.
  The affine Springer resolution $\widetilde{\mathcal N}\to{\mathcal N}$ is semi-small, but
  not strictly semi-small; the relevant strata are parameterized by $W_\fin/\!\Ad$~\cite[Lemma 4.4.4(d)]{BKV}
  (in particular,~\cite[Conjecture 3.3]{Lu} follows).
  This implies a weak form of Lusztig's conjecture~\cite[2.3]{Lu}:
  the equivalence of the two definitions not for
  arbitrary elements of the relevant GKM strata, but only for generic elements.
  As a consequence, we show Lusztig's conjecture~\cite[2.4]{Lu} asserting that for any
  $w\in W_\fin/\!\Ad$, the corresponding $S$-cell $W_w\subset W$ is non-empty, and
  that $W_w$ is finite if and only if $w$ is elliptic.

  Note that all geometric objects involved are infinite dimensional ind-schemes, therefore the classical notion of dimension does not make sense
  in this setting. Instead we apply the dimension theory developed in~\cite{BKV}.


In case $G$ is of type $A$ it is expected that affine $S$-cells coincide with the two-sided
Kazhdan–Lusztig cells (the latter cells are explicitly described in~\cite{L}), see in
particular~\cite[1.4]{Lu} where this is pointed out for
$G=\operatorname{SL}(3)$ and~\cite{La} where this is established for related (but a priori
different) $\tilde S$-cells defined in~\cite[Section 4]{Lu}.
In certain (rectangular) special cases this follows from the recent result of~\cite{BYY}
together with our main theorem. More precisely, in~\cite{BYY} relative positions of generic
points in components of certain affine Springer fibers are computed, the answer turns out
to be related to Kazhdan-Lusztig cells (as conjectured by R.~Bezrukavnikov to hold more
generally for groups of type A). These relative positions are related to $S$-cells by our
main theorem.
\end{Emp}


  In the next six subsections we provide definitions and more
precise formulations of the results.


\begin{Emp} \label{E:affstvar}
{\bf The affine Steinberg variety.}
(a) Let $G$ be a connected reductive group over an algebraically closed field ${\mathsf k}$, $W_\fin$ the Weyl group of $G$, and $R$ the set of roots of $G$. We assume that the characteristic of ${\mathsf k}$ does not divide the order of $W_\fin$.

(b) Let $\C{L}G$ be the loop group of $G$, $I\subset \C{L}G$ an Iwahori subgroup scheme, and $\Fl=\C{L}G/I$ the affine flag variety.
We denote by $\g$ the Lie algebra of $G$, by $\C{L}\g$ the corresponding loop algebra, and by $\C{I}^+\subset \C{L}\g$ the Lie algebra of the prounipotent radical $I^+$ of $I$. More generally, for every $[g]\in \Fl$, we set $\C{I}^+_g:=\Ad_g(\C{I}^+)$.

(c) Let $\CN\subset \C{L}\g$ be the locus of topologically nilpotent elements of $\C{L}\g$.
More precisely, let $\fc$ be the Chevalley space of $\g$, $\C{L}^+(\fc)\subset\C{L}\fc$ be the
arc and the loop spaces of $\fc$, respectively,
$\ev\colon \C{L}^+(\fc)\to\fc$ the evaluation map, and $\C{L}\chi\colon \C{L}\g\to\C{L}\fc$ the morphism,
induced by the canonical morphism $\chi\colon \g\to\fc$. Then we denote by $\C{L}^+(\fc)_\tn:=\ev^{-1}(0)\subset \C{L}^+(\fc)$ the locus of
topologically nilpotent elements, and set $\CN:=\C{L}\chi^{-1}(\C{L}^+(\fc)_\tn)\subset \C{L}G$.

(d) Let  $\wt{\CN}$ be the affine Springer resolution of $\CN$, which is a closed
ind-subscheme of $\CN\times \Fl$ consisting of points $(\gm,[g])$ such that $\gm\in\C{I}^+_g$.

(e) The affine Steinberg variety is the fibered product $\St:=\wt{\CN}\times_\CN \wt{\CN}$. It is a closed ind-subscheme of $\CN\times \Fl\times \Fl$ consisting of points $(\gm,[g'],[g''])$ such that $\gm\in\C{I}^+_{g',g''}:=\C{I}^+_{g'}\cap \C{I}^+_{g''}$.
\end{Emp}

\begin{Emp} \label{E:str by orbits}
{\bf Stratification by $\C{L}G$-orbits.}
(a) Let $f:X\to Y$ be a morphism of ind-schemes (or stacks). Then every stratification $\{Y_{\al}\}_{\al\in A}$ of $Y$ by locally closed sub-ind-schemes (or stacks) over $\fk$ induces a stratification $\{X_{\al}\}_{\al\in A}$ of $X$ such that $X_{\al}=f^{-1}(Y_{\al})$ for all $\al\in A$.

(b) Recall that there is a natural bijection $x\mapsto (\Fl\times \Fl)^x:=\C{L}G(1,x)$ between elements of the extended affine Weyl group $W$ of $G$ and $\C{L}G$-orbits in $\Fl\times \Fl$. In particular, we get a stratification $\{(\Fl\times \Fl)^x\}_{x\in W}$ of $\Fl\times \Fl$.

(c) Combining (a) and (b), we get a stratification $\{\St^x\}_{x\in W}$ of $\St$.
\end{Emp}

\begin{Emp} \label{E:gkm}
{\bf The Goresky--Kotwitz--MacPherson stratification.}
(a) As in~\cite{GKM} and~\cite[3.3.4]{BKV} the regular semisimple part $\C{L}^+(\fc)^\rss:=\C{L}^+(\fc)\cap(\C{L}\g)^{\rss}$ of $\C{L}^+(\fc)$ has a natural stratification by finitely presented locally closed irreducible subschemes $\fc_{w,\br}$, parameterized by $W_\fin$-orbits of pairs $(w,\br)$, where

\quad\quad $\bullet$ $w$ is an element of the Weyl group $W_\fin$,

\quad\quad $\bullet$ $\br$ is a function $R\to\B{Q}_{\geq 0}$, and

\quad\quad $\bullet$ $W_{\fin}$ acts by the formula $u(w,\br)=(uwu^{-1},u(\br))$ for all $u\in W_{\fin}$.

(b) Namely, denote by $h$ the order of $W_{\fin}$, fix the primitive $h$-th root of unity $\xi\in\fk$, and let
$\si\in\Aut(\fk[\![t^{1/h}]\!]/\fk[\![t]\!])$ be the automorphism given by the formula $\si(t^{1/h})=\xi t^{1/h}$.
Let $\ft$ be the abstract Cartan Lie algebra of $\g$, and let $\ft\to\fc$ be the natural projection. Then every $z\in \C{L}^+(\fc)(\fk)=\fc(\fk[\![t]\!])$ has a lift $\wt{z}\in \ft(\fk[\![t^{1/h}]\!])$.

The GKM stratification of $\C{L}^+(\fc)^\rss$ is characterized by the condition that $z\in \C{L}^+(\fc)_{w,\br}$ if and only if we have $\si(\wt{z})=w^{-1}(\wt{z})$ and  $\br(\al)$ equals the valuation of $\al(\wt{z})\in \fk(\!(t^{1/h})\!)^{\times}$ for all $\al\in R$.

(c) Applying observation of~\re{str by orbits}(a) to the projection $\C{L}\chi\colon \CN\to \C{L}^+(\fc)$, we get that the GKM stratification of
$\C{L}^+(\fc)^{\rss}$ induces stratifications of the regular semisimple part of $\CN$ and hence also of $\C{I}^+,\wt{\CN}$ and $\St$.
Note that if the stratum $\CN_{w,\br}$ (resp. $\C{I}^+_{w,\br}$) is non-empty, then $\br>0$, that is, $\br(\al)>0$ for every $\al\in R$.

(d) For every $w\in W_\fin$, we denote by $\ft_w$ the twisted form of $\ft$ over $\C{O}$ (see~\cite{GKM} or~\cite[3.3.3]{BKV}). The GKM stratification $\fc_{w,\br}$ of $\C{L}^+(\fc)^{\rss}$ induces a stratification
$\ft_{w,\br}$ of $\C{L}^+(\ft_w)^{\rss}$. Let $\C{L}^+(\ft_w)_\tn\subset \C{L}^+(\ft_w)$ be the locus of topologically nilpotent elements. Then we have an inclusion $\ft_{w,\br}\subset \C{L}^+(\ft_w)_\tn$ if and only if $\br>0$.
\end{Emp}

\begin{Emp} \label{E:min}
{\bf Minimal GKM pairs.} (a) We call a pair $(w,\br)$, where $w\in W_\fin$ and $\br$ is a function $R\to\B{Q}_{>0}$, is a {\em GKM pair},
if the stratum  $\fc_{w,\br}$ of  $\C{L}^+(\fc)^\rss$ is non-empty. We denote the set of $W_{\fin}$-orbits of GKM pairs by $\frak{P}$, and for
every GKM pair $(w,\br)$ we denote its class in $\fP$ by $[w,\br]$.

(b) Fix $w\in W_{\fin}$. We call a GKM pair $(w,\br)$ (or its class $[w,\br]\in\fP$) {\em minimal}, if the stratum $\ft_{w,\br}\subset\C{L}^+(\ft_w)_\tn$ is open. Explicitly, $\br$ is a minimal element among functions $R\to\B{Q}_{>0}$
such that $(w,\br)$ is a GKM pair.

(c) Notice that since each $\C{L}^+(\ft_w)_\tn$ is irreducible (see~\cite[3.4.4]{BKV}), it has a unique open GKM stratum
$\ft_{w,\br}$. Namely, it is the GKM stratum, containing the generic point of $\C{L}^+(\ft_w)_\tn$. Therefore for each $w\in W_{\fin}$ there exists a unique minimal GKM pair $(w,\br)$.

(d) We denote by $\frak{P}_{\min}\subset\frak{P}$ the set of minimal classes in $\frak{P}$.
\end{Emp}

Following Lusztig, we are now going to relate the two stratifications of the affine Steinberg variety $\St$ defined above.

\begin{Emp} \label{E:main}
{\bf Main construction.} Recall that $\St$ is a closed ind-subscheme of the product $\CN\times (\Fl\times \Fl)$.

(a) By~\re{str by orbits}(a) and~\re{gkm}(c), for every pair $(g',g'')\in \Fl\times \Fl$, the regular semisimple part of the fiber $\St^{g',g''}\subset \CN$ is equipped with a GKM-stratification $\{\St^{g',g''}_{w,\br}\}_{[w,\br]\in\frak{P}}$.
Similarly, for every $\gm\in \CN$, the reduced Steinberg fiber $\St_{\gm}\subset \Fl\times \Fl$ is equipped with a stratification $\{\St_{\gm}^x\}_{x\in W}$ (see~\re{str by orbits}(a),(b)).

(b)  Since $\St^{g',g''}=\C{I}^+_{g',g''}$ is irreducible while every GKM stratum $\St^{g',g''}_{w,\br}\subset\St^{g',g''}$ is a finitely presented locally closed subscheme, there exists a unique class $\wt{\pi}(g',g')=[w,\br]\in\fP$ such that the stratum
$\St^{g',g''}_{w,\br}\subset \St^{g',g''}$ is open (compare~\re{min}(c)). Moreover, since the GKM stratification of $\CN$ is $\C{L}G$-equivariant, the class  $\wt{\pi}(g',g')$ only depends on the $\C{L}G$-orbit of $(g',g'')$.

(c) By (b) and~\re{str by orbits}(b), for every $x\in W$ there exists a unique class $\pi(x)=[w,\br]\in\fP$ such that $\wt{\pi}(g',g'')=[w,\br]$ for every $(g',g'')\in (\Fl\times \Fl)^x$. We also denote by $\ov{\pi}(x):=[w]\in W_{\fin}/\!\Ad$ the conjugacy class of $w$.

(d) Assume from now on that  $\gm\in \CN\subset\C{L}\g$ is regular semisimple. Then the reduced affine Springer fiber $\Fl_{\gm}$ is an  equidimensional scheme locally of finite type over $\mathsf k$ (see~\cite{KL}). Hence the same is true for $\St_{\gm}=\Fl_{\gm}\times\Fl_{\gm}$. Moreover, by the formula of Bezrukavnikov--Kazhdan--Lusztig~\cite{Be}, for every class $[w,\br]\in\fP$ there exists $\dt_{w,\br}\in\B{Z}_{\geq 0}$ such that $\dim\Fl_{\gm}=\dt_{w,\br}$ for every $\gm\in \CN_{w,\br}$.

(e) Following Lusztig, we define a subset $\si(\gm)\subset W$ to be the set of all $x\in W$ such that the locally closed subscheme
$\St^x_{\gm}\subset \St_{\gm}$ is of full dimension  $\dim \St_{\gm}=2\dt_{w,\br}$. Alternatively, $x\in \si(\gm)$ if and only if there exist
irreducible components $C',C''$ of $\Fl_{\gm}$ such that $(C'\times C'')^x\subset C'\times C''$ is an open subscheme.
\end{Emp}

\begin{Emp} \label{E:conj}
  {\bf Lusztig's conjectures.} Lusztig conjectured that  the two maps defined above are closely
  connected. More precisely, Lusztig~\cite[Conjectures~3.3 and~2.3]{Lu} conjectured that

(a) For every $x\in W$, the class $\pi(x)=[w,\br]\in\fP$ is minimal.

(b) For every $[w,\br]\in\fP_{\min}$ and $\gm\in \CN_{w,\br}$, we have an equality $\si(\gm)=\pi^{-1}([w,\br])$.
In other words, for $[w,\br]\in\fP_{\min}$, $x\in W$ and $\gm\in \CN_{w,\br}$, we have $\pi(x)=[w,\br]$ if and only if
$\dim \St^x_{\gm}=2\dt_{w,\br}$.

\vskip 4truept

\noindent Lusztig~\cite[~2.4]{Lu} also remarked that assertions (a) and (b) imply that

\vskip 4truept

(c) For every $w\in W_{\fin}$, the preimage $\ov{\pi}^{-1}([w])$ is non-empty.

(d) Assume that $G$ is semisimple. Then $\ov{\pi}^{-1}([w])$ is finite if and only if $w$ is elliptic.
\end{Emp}

\begin{Emp}
  {\bf What is done in this work?} Our goal is to prove~Conjecture~\re{conj}(a) and to show
  that~Conjecture~\re{conj}(b) holds for ``generic" elements. More precisely, we show the
  existence of an $\C{L}G$-invariant open dense sub-indscheme ${}^x\CN_{w,\br}\subset \CN_{w,\br}$
  (depending on $x\in W$) such that for every $\gm\in {}^x\CN_{w,\br}$, we have $\pi(x)=[w,\br]$
  if and only if $\dim \St^x_{\gm}=2\dt_{w,\br}$. As a consequence, we deduce
  Conjectures~\re{conj}(c),(d). Finally, we show that the full Conjecture~\re{conj}(b)
  follows from a certain flatness conjecture.
\end{Emp}

\begin{Emp} \label{E:strategy}
{\bf Our strategy.} (a) To every morphism $f\colon X\to Y$ of schemes of finite type over $\fk$ we associate a dimension function
$\un{\dim}_f\colon X\to\B{Z}$ given by $\un{\dim}_f(z):=\dim_z X-\dim_{f(z)}Y$ for $z\in X$. 

(b) Our dimension function satisfies the property that for every $z\in X$ we have an inequality $\un{\dim}_f(z)\leq\dim_z f^{-1}(f(z))$ and that there exists an open dense subset $U\subset Y$ such that we have an equality $\un{\dim}_f(z)=\dim_z f^{-1}(f(z))$  for every $z\in f^{-1}(U)$.

(c) Our main observation is that the dimension function of (a) can be defined for locally
finitely presented morphisms between certain infinite-dimensional schemes, and that
property~(b) still holds in this case. Namely, it can be done when $Y$ is {\em placid}, that is,
locally has a presentation as a limit $Y\simeq\lim_i Y_i$, where each $Y_i$ is of finite type,
and all transition maps are smooth affine.

(d) Fix $x\in W$ and $[w,\br]\in\fP$. We would like to apply the construction~(c)
to the projection $p\colon\St^x_{w,\br}\to\CN_{w,\br}$.
Unfortunately, we can not do it directly, because both source and target of $p$ are infinite dimensional ind-schemes,
rather than schemes of finite type. To overcome this, we observe that the projection $p$ is $\C{L}G$-equivariant,
and there exists a natural embedding $\ft_{w,\br}\hra\CN_{w,\br}$, unique up to an $\C{L}G$-conjugacy
such that the composition $\ft_{w,\br}\hra\CN_{w,\br}\to[\C{L}G\bs \CN_{w,\br}]$ is surjective.
Therefore we can replace $p$ by its pullback $p_{\ft}\colon\St^x_{\ft,w,\br}\to\ft_{w,\br}$ to
$\ft_{w,\br}\subset\CN_{w,\br}$.

It turns out that the reduced ind-scheme  $(\St^x_{\ft,w,\br})_{\red}$ is actually a scheme,
locally finitely presented over $\ft_{w,\br}$, therefore
the construction of~(c) applies to $p_{\ft,\red}\colon(\St^x_{\ft,w,\br})_{\red}\to\ft_{w,\br}$. Furthermore, there is a discrete group $\La'$ acting freely and
discretely on $\St^x_{\ft,w,\br}$ over $\ft_{w,\br}$ such that the quotient
$[\La'\bs (\St^x_{\ft,w,\br})_{\red}]$ is a scheme, finitely presented over $\ft_{w,\br}$.
Thus an analog of~(b) applies to $p_{\ft,\red}$ as well.

(e) Our main technical result asserts that function $\un{\dim}_{p_{\ft},\red}$ equals
$2\dt_{w,\br}+a^+_{w,\br}-\un{b}(x)_{w,\br}^+$, where $a^+_{w,\br}$ is a non-negative integer such that
$a^+_{w,\br}=0$ if and only if the class $[w,\br]\in\fP$ is minimal, and $\un{b}(x)_{w,\br}^+$
is a non-negative function such that $\un{b}(x)_{w,\br}^+=0$ if and only if $\pi(x)=[w,\br]$.

(f) Both Conjecture~\re{conj}(a) and a weak form of Conjecture~\re{conj}(b) easily follow from the combination of~(e)
and~(b). Namely, when $\pi(x)=[w,\br]$, these assertions imply that for a generic
$\gm\in \ft_{w,\br}$, we have an inequality $\dim\St^x_{\gm}\geq 2\dt_{w,\br}+a^+_{w,\br}$, which implies
that $a^+_{w,\br}=0$, thus $[w,\br]$ is minimal. Conversely, if $[w,\br]$ is minimal, then for a
generic $\gm\in \ft_{w,\br}$, we have an equality
$\dim_{\wt{\gm}}\St^x_{\gm}=2\dt_{w,\br}-\un{b}(x)^+_{w,\br}(\wt{\gm})$ for every
$\wt{\gm}\in \St^x_{\gm}$, which implies that $\dim\St^x_{\gm}=2\dt_{w,\br}$ if and only if
$\pi(x)=[w,\br]$.
\end{Emp}

\begin{Emp}
  {\bf Plan of the paper.} The paper is organized as follows. In the first two sections we
  introduce our main ingredients, namely placid stacks
  and dimension functions, mostly repeating the corresponding parts from~\cite{BKV}. Then, in the
  next three sections we prove Lusztig conjecture~\re{conj}(a) and a weak form of~\re{conj}(b), and
  deduce conjectures~\re{conj}(c),(d) from them. Finally, in last section we deduce the full
  Lusztig conjecture~\re{conj}(b) from a certain flatness conjecture.
\end{Emp}

\begin{Emp}
  {\bf Acknowledgments.} This paper is based on the work~\cite{BKV} of two of us joint with
  Alexis Bouthier. We also thank Alexis Bouthier for his comments on the first draft of this work.

  Similar results were independently obtained by Zhiwei Yun (private communication). We are grateful to him for an interesting discussion.
  We are also grateful to Roman Bezrukavnikov for explanations about~\cite{BYY}.

  The research of Y.V.\ was partially supported by the ISF grant 822/17.
  The project have received funding from ERC under grant agreement No 669655.
  M.F.\ was partially funded within the framework of the HSE University Basic Research Program
  and the Russian Academic Excellence Project `5-100'.
\end{Emp}

\section{Placid stacks}

In this and the next sections we will review the material that appears in~\cite{BKV}.
To make the exposition simpler, most of our notions are more restrictive than those considered in~\cite{BKV}.

\begin{Emp} \label{E:glplsch}
{\bf Schemes admitting placid presentations.} (a) We say that a scheme $X$ over ${\mathsf k}$ {\em admits a placid presentation},
if it has a presentation
$X\simeq\lim_{i\in\B{N}} X_i$, where each $X_i$ is a scheme of finite type over ${\mathsf k}$, and every projection $X_{i+1}\to X_i$ is smooth and affine. 

(b) Let $f\colon Y\to X$ be a finitely presented morphism of schemes such that $X$ admits a placid presentation $X\simeq\lim_i X_i$. Then there exists an index $i$ and a morphism $f_i\colon Y_i\to X_i$ of schemes of finite type over ${\mathsf k}$ such that $f$ is a pullback of $f_i$. In particular, $Y\simeq\lim_{j\geq i}(Y_i\times_{X_i} X_j)$ is a placid presentation of $Y$.

(c) We call a morphism of schemes $f\colon X\to Y$ {\em strongly pro-smooth}, if $X$ has a presentation $X\simeq\lim_i X_i$ over $Y$, where $X_0\to Y$ is smooth and finitely presented, while all projections $X_{i+1}\to X_i$ are smooth, finitely presented and affine.

(d) The class of (c) is closed under compositions and pullbacks (see~\cite[1.1.3]{BKV}). It follows that if $f\colon X\to Y$ strongly pro-smooth, and $Y$ admits a placid presentation, then $X$ admits a placid presentation as well.

(e) Notice that a scheme $X$ admitting a placid presentation is irreducible if and only if has a placid presentation $X\simeq\lim_i X_i$ such that
 $X_i$ is irreducible for all $i$.
\end{Emp}


\begin{Emp} \label{E:plalgsp}
{\bf Placid algebraic spaces and smooth morphisms.}
(a) We call a scheme/an algebraic space $X$ {\em placid}, if it has an \'etale covering by schemes admitting placid presentations. Using~\re{glplsch}(b), one deduces that if $f\colon X\to Y$ is a locally finitely presented morphism of algebraic spaces and $Y$ is placid, then $X$ is placid.

(b) We call a morphism $f\colon X\to Y$ of algebraic spaces {\em smooth}, if locally in
the \'etale topology it is a strongly pro-smooth morphism of schemes. Explicitly this means that
there exist \'etale coverings $\{Y_{\al}\}_{\al}$  of $Y$ and $\{X_{\al,\beta}\}_{\beta}$ of
$f^{-1}(Y_{\al})=X\times_Y Y_{\al}$ by schemes such then every
$X_{\al,\beta}\to Y_{\al}$ is strongly pro-smooth. Using~\re{glplsch}(d) one sees that if
$f\colon X\to Y$ is a smooth morphism of algebraic spaces and $Y$ is placid, then $X$ is placid as well.

(c) The class of smooth morphisms is closed under compositions and pullbacks (by~\re{glplsch}(d)).

(d) As in~\cite{BKV}, our smooth morphisms are not assumed to be locally finitely presented. On the other hand,
all smooth morphisms are automatically flat.
\end{Emp}

\begin{Emp}
{\bf Remark.}
For the purpose of this work, we could avoid talking about algebraic spaces, and restrict ourselves to schemes instead
(compare~\re{remschemes}). Furthermore, all placid schemes appearing in this work have Zariski open coverings by schemes
admitting placid presentations.
\end{Emp}

\begin{Emp} \label{E:plstacks}
{\bf Placid stacks.}
(a) By a {\em stack} over $\fk$, we mean a stack in groupoids in the \'etale topology. Using observation~\re{plalgsp}(c),
we can talk about smooth representable morphisms between stacks.

(b) A stack $\C{X}$ over $\fk$ is called {\em placid}, if there exists a smooth representable surjective morphism
$X\to\C{X}$ from a placid algebraic space $X$. Such a map is called a {\em placid atlas}.

(c) A representable morphism of stacks $f\colon \C{X}\to\C{Y}$ is called {\em (locally) finitely presented}, if for every morphism $Y\to\C{Y}$ from an algebraic space  $Y$, the pullback $\C{X}\times_{\C{Y}}Y\to Y$ is a (locally) finitely presented morphism of algebraic spaces.

(d) Assume that in the situation of (c) the stack $\C{Y}$ is placid. Then $\C{X}$ is placid
as well. Indeed, if $Y\to \C{Y}$ is a placid atlas, then   $\C{X}\times_{\C{Y}}Y\to \C{X}$ is
a placid atlas by~\re{plalgsp}(c).
\end{Emp}

\begin{Emp} \label{E:basic example}
{\bf Example.} Let $G$ be a strongly pro-smooth group scheme acting on a placid algebraic space $X$. Then the quotient stack
$\C{X}=[G\bs X]$ is placid, and the projection $X\to\C{X}$ is a placid atlas.
\end{Emp}

\begin{Emp}
{\bf The underlying set.}
(a) Recall that to every stack $\C{X}$ over $\fk$, one associates the underlying set $\un{\C{X}}$, whose points are equivalent classes
of pairs $(K,z)\in \C{X}(K)$, where $K$ is a field extension of $\fk$, $z\in \C{X}(K)$ and $(z_1,K_1)\sim (z_2,K_2)$, if there exists a larger field
$K\supset K_1,K_2$ such that points $z_1|_K,z_2|_K\in\C{X}(K)$ are isomorphic.

(b) Note that when $X$ is an algebraic space, then $\un{X}$ is the underlying set of $X$. More generally,
if $\C{X}$ is the quotient stack $[G\bs X]$ as in~\re{basic example}, the $\un{\C{X}}$ is the set of orbits $\un{G}\bs\un{X}$.

(c) To simplify the notation, we will denote the set $\un{\C{X}}$ simply by $\C{X}$.
\end{Emp}

\begin{Emp} \label{E:red}
{\bf Reduction.}
(a) Recall that to every scheme/algebraic space $X$ one can associate the corresponding reduced scheme/algebraic space $X_{\red}$. Moreover, $X_{\red}$ is placid, if $X$ is such (see~\cite[Lemma 1.4.5]{BKV}).

(b) More generally, to every placid stack $\C{X}$ one can associate a reduced  placid stack $\C{X}_{\red}$ (see~\cite[1.4]{BKV}).
Furthermore, the assignment $\C{X}\mapsto\C{X}_{\red}$ is functorial, we have a canonical functorial finitely presented closed embedding  $\C{X}_{\red}\to\C{X}$, and the induced map $\un{\C{X}_{\red}}\to\un{\C{X}}$ of the underlying sets is a bijection.
\end{Emp}

\section{Dimension theory}

\begin{Emp} \label{E:dimft}
{\bf Dimension function: schemes of finite type.}
(a) To every map of sets $f\colon X\to Y$ and a function $\phi\colon Y\to\B{Z}$, we associate the function  $f^*(\phi)=\phi|_X:=\phi\circ f\colon X\to\B{Z}$.

(b) For a scheme $X$ of finite type over $\fk$ and $z\in X$, we denote by $\dim_z(X)$ is the maximum of dimensions of irreducible components of $X$, containing $z$. As in~\cite[2.1.1]{BKV}, one associates to $X$ a dimension function $\un{\dim}_X\colon X\to\B{Z}$, defined by $\un{\dim}_X(z)=\dim_z(X)$ for every $z\in X$.

(c) Then, as in~\cite[2.1.2]{BKV}, to every morphism $f\colon X\to Y$ between schemes of finite type over $\fk$, we associate the dimension function
\[
\un{\dim}_f=\un{\dim}(X/Y):=\un{\dim}_X-f^*(\un{\dim}_Y)\colon X\to \B{Z}.
\]
In other words, we define  $\un{\dim}_f(z):=\dim_z(X)-\dim_{f(z)}(Y)$ for every $z\in X$.
%
%
\end{Emp}

Next we are going to extend these notions to placid schemes and stacks.

\begin{Emp} \label{E:dimplst}
{\bf Dimension function: placid stacks} (see~\cite[Lemmas 2.2.4 and 2.2.5]{BKV}).

(a) For every finitely presented morphism $f\colon X\to Y$ of schemes admitting placid presentations, there exists a unique dimension function
$\un{\dim}_f=\un{\dim}(X/Y)\colon X\to\B{Z}$ such that for every placid presentation $Y\simeq\lim_i Y_i$ of $Y$ and morphism $f_i\colon X_i\to Y_i$ as in~\re{glplsch}(b), we have $\un{\dim}_f=\pi_i^*(\un{\dim}_{f_i})$, where $\un{\dim}_{f_i}$ was defined in \re{dimft}, and $\pi_i\colon X\to X_i$ is the projection. In other words, we have $\un{\dim}_f(z)=\un{\dim}_{f_i}(\pi_i(z))$ for every $z\in X$.

(b) For every locally finitely presented morphism $f\colon X\to Y$ of placid algebraic spaces,
there exists a unique function
$\un{\dim}_f=\un{\dim}(X/Y)\colon X\to\B{Z}$ such that for every commutative diagram
\begin{equation} \label{Eq:cddimalgsp}
\begin{CD}
X' @>f'>> Y'\\
@V h VV @VV g V\\
X @>f>> Y,
\end{CD}
\end{equation}
where $f'\colon X'\to Y'$ is a finitely presented morphism of schemes admitting placid presentations, and $g$ and $h$ are \'etale, we have an equality $h^*(\un{\dim}_f)=\un{\dim}_{f'}$, where
$\un{\dim}_{f'}$ was defined in (a).

(c) For every representable locally finitely presented morphism $f\colon \C{X}\to \C{Y}$ of placid stacks, there exists a unique function
$\un{\dim}_f=\un{\dim}(\C{X}/\C{Y})\colon \un{\C{X}}\to\B{Z}$ such that for every Cartesian diagram
\[
\begin{CD}
X @>f'>> Y\\
@V h VV @VV g V\\
\C{X} @>f>> \C{Y},
\end{CD}
\]
where $g$ and $h$ are placid atlases, we have
$h^*(\un{\dim}_f)=\un{\dim}_{f'}$, where
$\un{\dim}_{f'}$ was defined in (b).
\end{Emp}

\begin{Emp} \label{E:bex2}
{\bf Example.} In the situation of~\re{basic example}, let $f\colon Y\to X$ be a $G$-equivariant finitely presented morphism of
algebraic spaces. Then $f$ induces a finitely presented morphism $[f]\colon  [G\bs Y]\to [G\bs X]$ between quotient stacks, and our construction~\re{dimplst}(c) says that the function $\un{\dim}_f\colon Y\to \B{Z}$ is the pullback of
$\un{\dim}_{[f]}\colon G\bs Y\to \B{Z}$.
\end{Emp}

\begin{Emp} \label{E:propdim}
{\bf Properties.}
(a) The dimension function is {\em additive}, that is, for every pair $\C{X}\overset{f}{\to} \C{Y}\overset{g}{\to} \C{Z}$ of morphisms
as in \re{dimplst}(c), we have an equality $\un{\dim}_{gf}=\un{\dim}_f+f^*(\un{\dim}_g)$ (see~\cite[Lemma 2.2.5]{BKV}).

(b) For every $f$ as in \re{dimplst}(c), the induced morphism $f_{\red}\colon \C{X}_{\red}\to \C{Y}_{\red}$
is a representable locally finitely presented morphism of placid stacks as well (see~\re{red}(b)), and the dimension function
$\un{\dim}_{f_{\red}}\colon \un{\C{X}_{\red}}\to\B{Z}$ is the pullback of $\un{\dim}_{f}$ (see~\cite[Corollary 2.2.8]{BKV}).
\end{Emp}

\begin{Emp}
{\bf Notation.}
(a) We say that a finitely presented representable $f$ is of {\em constant dimension}, if the dimension function $\un{\dim}_f$ is constant. In this case, we often write ${\dim}_f={\dim}(X/Y)$ instead of
$\un{\dim}_f=\un{\dim}(X/Y)$.

(b) For a finitely presented locally closed embedding $\iota\colon Y\hra X$, we define
$\un{\codim}_X(Y):=-\un{\dim}_{\iota}$. Again, we write ${\codim}_X(Y)$ instead of $\un{\codim}_X(Y)$, when $\iota$ is of constant dimension.
\end{Emp}

\begin{Lem} \label{L:dimfib}
Let $f\colon  X\to Y$ be a finitely presented morphism between placid algebraic spaces.

(a) For every $z\in X$, we have an inequality $\un{\dim}_f(z)\leq \dim_z (f^{-1}(f(z))$.

(b) If $f$ is open, then the inequality of~(a) is an equality for all $z\in X$.

(c) Set $\dim\emptyset=-\infty$. Then there exists an open dense subspace $U\subset Y$ such that the function
$y\mapsto \dim f^{-1}(y)$ is locally constant on $U$,
and for every $z\in f^{-1}(U)$, the inequality of~(a) is an equality.

(d) Assume that $X$ is non-empty, $Y$ is irreducible, and the inequality of~(a) is
an equality for all $z\in X$. Then for every $U$ as in (c) and every $y\in U$, the fiber $f^{-1}(y)$ is non-empty.
\end{Lem}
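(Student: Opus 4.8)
The strategy is to reduce everything to the classical case of finite-type schemes via the placid-presentation machinery of Section~1, and then invoke standard dimension theory for morphisms of finite type. The key preliminary reduction is this: by \re{plalgsp}(a) the statement is étale-local on $Y$ and then (via a placid presentation pulled back to the étale chart) étale-local on $X$, so we may assume $X$ and $Y$ admit placid presentations. By \re{glplsch}(b), since $f$ is finitely presented we may choose a placid presentation $Y\simeq\lim_i Y_i$ and an index $i$ and a finite-type morphism $f_i\colon X_i\to Y_i$ of which $f$ is a pullback, with $X\simeq\lim_{j\geq i}(X_i\times_{Y_i}Y_j)$ a placid presentation of $X$. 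Write $\pi_X\colon X\to X_i$ and $\pi_Y\colon Y\to Y_i$ for the projections; these are strongly pro-smooth, hence flat with geometrically irreducible (in fact smooth) fibers, and in particular surjective, open, and dimension-preserving in the sense that $\dim_z X = \dim_{\pi_X(z)} X_i$ up to the (pro-)dimension bookkeeping already encoded in $\un{\dim}$. By \re{dimplst}(a) we have $\un{\dim}_f = \pi_X^*(\un{\dim}_{f_i})$.

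For part (a): fix $z\in X$, let $z_i=\pi_X(z)$, $y=f(z)$, $y_i=\pi_Y(y)=f_i(z_i)$. The fiber $f^{-1}(y)$ is the base change of $f_i^{-1}(y_i)$ along the strongly pro-smooth morphism $\Spec k(y)\times_{\Spec k(y_i)}(\text{stuff})$—more precisely $f^{-1}(y)\to f_i^{-1}(y_i)$ is strongly pro-smooth, so $\dim_z f^{-1}(y) = \dim_{z_i} f_i^{-1}(y_i)$ (smooth morphisms preserve local dimension). On the finite-type side the classical inequality $\un{\dim}_{f_i}(z_i)=\dim_{z_i}X_i-\dim_{y_i}Y_i\leq \dim_{z_i}f_i^{-1}(y_i)$ holds (this is the standard fact that fiber dimension is at least the difference of dimensions, at every point, for a morphism of finite-type schemes over a field). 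Pulling back via $\pi_X$ gives (a). Part (b) is the same argument: if $f$ is open then $f_i$ can be taken open after possibly enlarging $i$ (openness descends through the pro-smooth tower), and for an open morphism of finite-type schemes the inequality $\un{\dim}_{f_i}\leq\dim_{z_i}f_i^{-1}(y_i)$ is an equality at every point—this is the classical statement that a flat, or more generally open, morphism of finite type has fiber dimension exactly the difference of local dimensions. For part (c): apply \rl{dimfib}-type generic flatness (or rather the classical generic-fiber-dimension theorem) to $f_i\colon X_i\to Y_i$, producing a dense open $U_i\subset Y_i$ on which $y_i\mapsto\dim f_i^{-1}(y_i)$ is locally constant and the inequality is an equality; then set $U:=\pi_Y^{-1}(U_i)$, which is dense open in $Y$ because $\pi_Y$ is strongly pro-smooth hence open and dominant, and use that $f^{-1}(U)=\pi_X^{-1}(f_i^{-1}(U_i))$ together with the dimension-preservation of $\pi_X$ along fibers.

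For part (d): assume $X\neq\emptyset$, $Y$ irreducible, and $\un{\dim}_f(z)=\dim_z f^{-1}(f(z))$ for all $z\in X$. Reduce as above; note by \re{glplsch}(e) we may take all $Y_j$ (hence the relevant $X_j$, after possibly shrinking) irreducible. The hypothesis pulls down to: $\un{\dim}_{f_i}(z_i)=\dim_{z_i}f_i^{-1}(y_i)$ for all $z_i$ in the image of $\pi_X$; but $\pi_X$ is surjective, so this holds for all $z_i\in X_i$. Now I claim this forces $f_i$ to be dominant (equivalently, since $Y_i$ is irreducible, to have dense image). Indeed, if not, the image of $f_i$ is contained in a proper closed subset, and one can find a point $z_i\in X_i$ lying over a generic point of a component of the image where the relation $\dim_{z_i}X_i-\dim_{y_i}Y_i=\dim_{z_i}f_i^{-1}(y_i)$ fails—because the left side is strictly smaller than it would be if $Y_i$ were replaced by the closure of the image. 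Concretely: let $Z=\overline{f_i(X_i)}\subsetneq Y_i$ with $\dim Z<\dim Y_i$; choosing $z_i$ generic in a top-dimensional component of $X_i$, $\dim_{z_i}f_i^{-1}(f_i(z_i))=\dim_{z_i}X_i-\dim_{f_i(z_i)}Z>\dim_{z_i}X_i-\dim_{f_i(z_i)}Y_i=\un{\dim}_{f_i}(z_i)$, contradicting the equality. Hence $f_i$ is dominant; therefore (being finitely presented, hence of finite type, with irreducible target) $f_i(X_i)$ contains a dense open of $Y_i$, and by part (c) applied to $f_i$ we may choose the open $U_i$ (whence $U=\pi_Y^{-1}(U_i)$) inside this dense open, so $f_i^{-1}(y_i)\neq\emptyset$ for $y_i\in U_i$; pulling back, $f^{-1}(y)\neq\emptyset$ for $y\in U$. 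Since by hypothesis $U$ is an arbitrary open as in (c), and any two such are compatible, the conclusion holds for every $y\in U$.

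The main obstacle I anticipate is purely bookkeeping: making precise that "strongly pro-smooth morphisms preserve local dimension of fibers and preimages" and that openness/irreducibility descend appropriately through the tower, so that the finite-type facts transfer cleanly—none of this is deep, but it must be done with care because $\un{\dim}$ is defined through the presentation and one has to check the choices of $i$ and of $U_i$ can be made compatibly. The genuinely non-formal input, the equality of fiber dimension and difference of dimensions for open (and generically for arbitrary) finite-type morphisms over a field, is classical and can be cited; everything else is descent along the class of morphisms from \re{glplsch}(c)--(d).
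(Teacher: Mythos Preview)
Your proposal is correct and follows essentially the same route as the paper: reduce via \'etale covers and placid presentations to a finite-type morphism $f_i\colon X_i\to Y_i$ over $\fk$, invoke classical dimension theory (EGA IV 14.2.1 / Stacks 0B2L) for (a)--(c), and for (d) compare with the closure $\overline{f_i(X_i)}$ to deduce dominance. The paper's argument for (d) is marginally slicker---it applies the inequality of (a) for $X_i\to\overline{f_i(X_i)}$ at \emph{every} point to get $\dim_{f(z)}\overline{f(X)}=\dim_{f(z)}Y$ directly, rather than choosing a generic point---but the content is the same.
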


\begin{proof}
Assume first that  $X$ and $Y$ are schemes of finite type over $\fk$. In this case the assertions (a) and (b) are well-known (see, for example,~\cite[14.2.1]{EGA} or~\cite[0B2L]{St}).

Next, (c) is easy. Namely, shrinking $Y$, one can assume that every connected component of $Y$ is irreducible, thus
reduce to the case, when $Y$ is irreducible. Next, it is enough to show the assertion for the restriction $f_{\al}\colon X_{\al}\to Y$ of $f$ to each irreducible component of $X$, thus we can assume that $X$ is irreducible as well.  In this case, the assertion is standard.

Finally, to show (d) we let $Y'\subset Y$ be  the closure of $f(X)$. Then, by our assumption and (a), for every $z\in X$ we have
\[
\dim_z(X)-\dim_{f(z)}(Y')\leq \dim_z (f^{-1}(f(z))=\dim_z(X)-\dim_{f(z)}(Y),
\]
thus $\dim_{f(z)}(Y')=\dim_{f(z)}(Y)$. Since $X$ is non-empty and $Y$ is irreducible, this implies that $f$ is dominant, which implies the assertion.

Assume now that $X$ and $Y$ are schemes admitting placid presentations. Then $f$ is a pullback of a certain morphism $f'\colon Y'\to X'$ of schemes of finite type over $\fk$, and the assertion for $f$ follows from the corresponding assertion for $f'$. Namely, if $U'\subset X'$ satisfies the condition of the lemma for $f'$, then its preimage $U\subset X$  satisfies the condition for $f$.

The general case now easily follows. Indeed, choose an \'etale covering $\{Y_{\al}\}_{\al}$ of $Y$ by schemes $Y_{\al}$ admitting placid presentations. Then the assertion for $f$ follows from the corresponding assertion for $X\times_Y Y_{\al}\to Y_{\al}$. Thus we can assume
that $Y$ is a scheme admitting a placid presentation. Finally, choose an \'etale covering $X'\to X$ by a scheme admitting a placid presentation. Then the assertion for $f$ follows from the corresponding assertion for $X'\to X\overset{f}{\to}Y$.
\end{proof}




\section{Proof of Conjecture ~\re{conj}(a)}

We fix $x\in W$ and $[w,\br]\in\fP$.

\begin{Emp}
{\bf Notation.}
%
(a) Set $\C{Y}:=\wt{\CN}\times_\CN \C{I}^+$. Then $\C{Y}$ is a closed ind-subscheme of $\C{I}^+\times\Fl$.

(b) Using embedding $W\hra\Fl$, we can view $x$ as a point of $\Fl$, and set $\Fl^x:=Ix\subset\Fl$.
We denote by $\C{Y}^x\subset\C{Y}$ the preimage of $\Fl^x\subset\Fl$, and by
$\C{Y}^x_{w,\br}\subset\C{Y}^x$ the preimage of $\C{I}^+_{w,\br}\subset \C{I}^+$.

(c) Notice that $\C{I}^+$ is an affine scheme admitting a placid presentation, $\C{I}^+_{w,\br}\subset\C{I}^+$ is a finitely presented locally closed subscheme, while both projections $\C{Y}^x\to\C{I}^+$ and $\C{Y}^x_{w,\br}\to\C{I}_{w,\br}^+$ are finitely presented. Thus $\C{I}_{w,\br}^+,   \C{Y}^x$ and $\C{Y}^x_{w,\br}$ are schemes admitting placid presentations (by~\re{glplsch}(b)).


\end{Emp}

\begin{Emp} \label{E:bx+}
{\bf Notation.}
(a) Set
$I(x):=I\cap xI x^{-1}\subset\C{L}G$ and $\C{I}(x)^+:=\C{I}^+\cap\Ad_{x}(\C{I}^+)\subset\C{L}\g$.
Note that $\C{I}(x)^+$ was denoted by $\C{I}^+_{1,x}$ in~\re{affstvar}(e).

(b) Note that $\C{I}(x)^+$ is a scheme admitting a placid presentation, and $\C{I}(x)^+_{w,\br}\subset\C{I}(x)$ is finitely presented locally closed subscheme. Then $\C{I}(x)^+_{w,\br}$ admits a placid presentation (by~\re{glplsch}(b)), and we can consider the codimension function
\begin{equation} \label{Eq:bx}
\un{b}(x)^+_{w,\br}:=\un{\codim}_{\C{I}(x)^+}(\C{I}(x)^+_{w,\br})\colon \C{I}(x)^+_{w,\br}\to\B{Z}.
\end{equation}

(c) Note that $I(x)$ is a strongly pro-smooth group scheme. Since $\C{I}(x)^+$ and $\C{I}(x)^+_{w,\br}$ are $\Ad I(x)$-equivariant,
we can form quotient stacks $[I(x)\bs\C{I}(x)^+_{w,\br}]$ and $[I(x)\bs\C{I}(x)^+]$, both of which are placid (see~\re{basic example}). Using ~\re{bex2},
the codimension function $\un{b}(x)^+_{w,\br}$ of \form{bx} is induced by the codimension function
$\un{\codim}_{[I(x)\bs \C{I}(x)^+]}([I(x)\bs \C{I}(x)^+_{w,\br}])$,
which we also denote by $\un{b}(x)^+_{w,\br}$.
\end{Emp}

\begin{Emp}
{\bf Remark.}
If $x\in W$ is the unit element, then $\C{I}(x)^+=\C{I}^+$. In this case,
by~\cite[3.4.4(a) and Corollary 3.4.9]{BKV},
the function $\un{b}(x)^+_{w,\br}:=\un{\codim}_{\C{I}^+}(\C{I}^+_{w,\br})$ is the constant function
with value $\codim_{\C{L}^+(\fc)_\tn}(\fc_{w,\br})=\codim_{\C{L}^+(\fc)}(\fc_{w,\br})-r$, that was denoted by
$b^+_{w,\br}$ in~\cite[3.4.4(d)]{BKV}. Here $r=\dim\fc$ is the rank of $G$.
\end{Emp}


\begin{Lem} \label{L:basic}
(a) We have $\un{b}(x)^+_{w,\br}=0$ if $\pi(x)=[w,\br]$, and $\un{b}(x)^+_{w,\br}>0$ otherwise.

(b) We have natural isomorphisms $[I\bs\C{Y}^x]\simeq [I(x)\bs \C{I}(x)^+]$ and  $[I\bs\C{Y}_{w,\br}^x]\simeq [I(x)\bs \C{I}(x)^+_{w,\br}]$.



(c) The projection $\C{Y}^x\to\C{I}^+$ is  affine finitely presented, and $\un{\dim}(\C{Y}^x/\C{I}^+)=0$.
\end{Lem}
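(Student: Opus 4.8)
The plan is to prove part (b) first, since it is the structural heart of the lemma and parts (a) and (c) follow from it together with material already set up. For (b), recall that $\C{Y}=\wt{\CN}\times_\CN\C{I}^+$ parameterizes triples $(\gm,[g])$ with $\gm\in\C{I}^+\cap\C{I}^+_g$, and $\C{Y}^x$ is the locus where $[g]\in\Fl^x=Ix$. The key observation is that $Ix\cong I/I(x)$ as an $I$-space (since the stabilizer of $x\in\Fl$ under the $I$-action, i.e.\ $I\cap\Ad_x(I)$, is exactly $I(x)$), so an $I$-equivariant map out of $\C{Y}^x$ is the same datum as an $I(x)$-equivariant map out of the fiber over the point $x$ itself. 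Concretely, I would write down the morphism sending $(\gm,[g])\in\C{Y}^x$, after choosing a representative $g=ix$ with $i\in I$, to the pair consisting of $[g]\in Ix$ and $\Ad_{i^{-1}}(\gm)$; one checks $\Ad_{i^{-1}}(\gm)\in\C{I}^+\cap\Ad_x(\C{I}^+)=\C{I}(x)^+$, and that $\Ad_{i^{-1}}(\gm)$ is well-defined modulo the residual $I(x)$-action. This exhibits $\C{Y}^x\to\Fl^x$ as the $I$-equivariant fibration associated to the $I(x)$-space $\C{I}(x)^+$, hence $[I\bs\C{Y}^x]\simeq[I(x)\bs\C{I}(x)^+]$. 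Since the GKM stratum $\C{I}^+_{w,\br}$ is pulled back from $\C{L}^+(\fc)$ and is therefore $\C{L}G$-equivariant (in particular $I$- and $I(x)$-equivariant), the isomorphism restricts to $[I\bs\C{Y}^x_{w,\br}]\simeq[I(x)\bs\C{I}(x)^+_{w,\br}]$, matching the GKM stratification on both sides.

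For part (a), I would combine (b) with the definition of $\pi(x)$ from~\re{main}. By~\re{main}(b), $\wt\pi(g',g'')=[w,\br]$ iff the GKM stratum $\St^{g',g''}_{w,\br}$ is open in $\St^{g',g''}=\C{I}^+_{g',g''}$, and for $(g',g'')$ in the orbit of $(1,x)$ this is $\C{I}(x)^+$. Thus $\pi(x)=[w,\br]$ iff $\C{I}(x)^+_{w,\br}$ is open (hence dense, by irreducibility of $\C{I}(x)^+$ — which holds as it admits a placid presentation with irreducible terms) in $\C{I}(x)^+$, i.e.\ iff the codimension function $\un{b}(x)^+_{w,\br}=\un{\codim}_{\C{I}(x)^+}(\C{I}(x)^+_{w,\br})$ vanishes identically. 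When $\pi(x)\neq[w,\br]$, the stratum $\C{I}(x)^+_{w,\br}$ is a proper locally closed subscheme of the irreducible $\C{I}(x)^+$, so each of its points has strictly positive codimension; more precisely, using~\rl{dimfib} applied to a placid presentation, $\dim_z\C{I}(x)^+_{w,\br}<\dim_{z}\C{I}(x)^+$ for every $z$, giving $\un{b}(x)^+_{w,\br}>0$.

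For part (c), the projection $\C{Y}^x\to\C{I}^+$ sends $(\gm,[g])$ to $\gm$; its fiber over $\gm$ is the set of $[g]\in Ix$ with $\gm\in\C{I}^+_g$, a closed subscheme of $Ix$, and finite presentedness and affineness over $\C{I}^+$ are already recorded in the Notation preceding the lemma. To compute $\un{\dim}(\C{Y}^x/\C{I}^+)$ I would use additivity of the dimension function~\re{propdim}(a) together with the isomorphism $[I\bs\C{Y}^x]\simeq[I(x)\bs\C{I}(x)^+]$ from (b): chasing through the atlases $I\to[I\bs\C{Y}^x]$ and $I(x)\to[I(x)\bs\C{I}(x)^+]$ and using that $I\to I/I(x)=\Fl^x$ is strongly pro-smooth, one identifies $\un{\dim}(\C{Y}^x/\C{I}^+)$ with $\un{\dim}([I(x)\bs\C{I}(x)^+]/[I\bs\C{I}^+])$; but $\C{I}(x)^+\hra\C{I}^+$ is an open immersion generically—no, more carefully, the point is that $\C{I}(x)^+=\C{I}^+\cap\Ad_x\C{I}^+$ need not be open, so instead I would argue directly: both $\C{Y}^x$ and $\C{I}^+$ are placid, the map is finitely presented, and over the dense open where $\gm$ is regular semisimple the fiber is the reduced Springer-type fiber $(\Fl_\gm)^x\cap Ix$, which is $0$-dimensional (a subscheme of the discrete-modulo-$I(x)$ set $I/I(x)$ cut out by the condition $\Ad_{i^{-1}}\gm\in\C{I}(x)^+$, an open-and-closed condition on the profinite set of cosets); hence by~\rl{dimfib}(c) the dimension function is $0$ on a dense open, and by~\rl{dimfib}(a) it is $\le 0$ everywhere, while properness-type or equidimensionality considerations for the affine Springer setup force it to be exactly $0$.

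The main obstacle I anticipate is part (c): making the fiber-dimension count rigorous requires care because $\C{Y}^x\to\C{I}^+$ is not open, so one cannot simply invoke~\rl{dimfib}(b); the cleanest route is probably to reduce, via (b) and the smooth (strongly pro-smooth) projection $I\to\Fl^x$, to computing the dimension function of $[I(x)\bs\C{I}(x)^+]\to[I\bs\C{I}^+]$, and then to observe that this map, after a placid presentation, is a locally closed immersion up to smooth factors—indeed $\C{I}(x)^+\hookrightarrow\C{I}^+$ is a locally closed immersion and $I(x)\hookrightarrow I$ has the complementary "codimension" coming from $\Fl^x=Ix$—so the two codimensions cancel. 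I would isolate this cancellation as the technical core, possibly citing the relevant bookkeeping from~\cite[\S1,\S2]{BKV} on dimension functions of strongly pro-smooth morphisms and locally closed immersions between placid schemes.
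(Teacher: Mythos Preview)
Your arguments for (a) and (b) are correct and essentially identical to the paper's: $I$ acts transitively on $\Fl^x$ with stabilizer $I(x)$, so the $I$-quotient of $\C{Y}^x$ is the $I(x)$-quotient of the fiber $\C{I}(x)^+$ over $x$, and the GKM strata match because they are pulled back from $\fc$.

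For (c) there is a genuine gap. First, a small point: affineness of $\C{Y}^x\to\C{I}^+$ is \emph{not} recorded in the preceding Notation (only finite presentation is), so it is part of what must be proven here. More seriously, your ``direct'' fiber argument is based on a misconception: $I/I(x)\cong\Fl^x$ is the Schubert cell, an affine space of dimension $\ell(x)$, not a ``profinite set of cosets''. The condition $\Ad_{i^{-1}}(\gm)\in\C{I}(x)^+$ is a closed condition cutting out a positive-dimensional subscheme of this affine space in general (e.g.\ for $\gm=0$ the fiber is all of $\Fl^x$), so the claimed $0$-dimensionality of fibers over the regular semisimple locus is unfounded, and the $\rl{dimfib}$(c) route does not go through.

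Your final paragraph correctly identifies the actual mechanism, and it is exactly what the paper does: pass to $[I(x)\bs\C{I}(x)^+]\to[I\bs\C{I}^+]$ and factor as
\[
[I(x)\bs\C{I}(x)^+]\hookrightarrow[I(x)\bs\C{I}^+]\longrightarrow[I\bs\C{I}^+].
\]
The first map is a closed embedding (this also gives affineness), and the second is an $I/I(x)$-fibration. The point you are missing, which makes the ``cancellation'' precise, is the non-canonical isomorphism $I/I(x)\simeq\C{I}^+/\C{I}(x)^+$ of affine spaces (both of dimension $\ell(x)$, coming from the Iwahori/Bruhat decomposition). Once you state and use this, the two contributions $-\ell(x)$ and $+\ell(x)$ visibly cancel and the dimension function is identically $0$.
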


\begin{proof}
(a) By definition, $\pi(x)=[w,\br]$ is the unique class such that the GKM stratum
$\C{I}(x)^+_{w,\br}\subset\C{I}(x)^+$ is open dense. This implies the assertion.

(b) By definition,  $\C{Y}^x$ is an $I$-invariant closed subscheme of $\CN\times \Fl^x$ consisting of points $(\gm,[g])$ such that
$\gm\in\C{I}^+_g\cap\C{I}^+$, where $I$ acts by the formula $h(\gm,g)=(\Ad_h(\gm),hg)$. Since $I$ acts transitively in $\Fl^x$ and $I(x)\subset I$ is the stabilizer of $x\in\Fl$, the isomorphism $[I\bs\C{Y}^x]\simeq [I(x)\bs \C{I}(x)^+]$ follows.
The second isomorphism follows from the first by taking preimages of $\fc_{w,\br}\subset\C{L}^+(\fc)$.

(c) Taking the quotient by $I$, it suffices to show that the projection
$[I\bs\C{Y}^x]\to[I\bs\C{I}^+]$ is affine finitely presented of constant dimension zero (compare~\re{bex2}).
Using~(b), this projection can be identified with the composition
$[I(x)\bs\C{I}(x)^+]\to [I(x)\bs\C{I}^+]\to [I\bs\C{I}^+]$. Since $\C{I}(x)^+\subset\C{I}^+$ is closed finitely presented subscheme, while
$I/I(x)\simeq\C{I}^+/\C{I}(x)^+$ is non-canonically isomorphic to an affine space, the assertion follows.
\end{proof}

\begin{Emp} \label{E:form}
{\bf Notation.} (a) As in~\cite[3.4.1]{BKV}, we set $d_{\br}:=\sum_{\alpha\in R}\br(\alpha)$,
$c_w:=\dim\ft-\dim\ft^w$, where $\ft^w\subset\ft$ denotes the space of $w$-invariants, and  $\dt_{w,\br}:=\frac{1}{2}(d_\br-c_w)$.

(b) Note that $\ft_{w,\br}\subset\C{L}^+(\ft_w)_\tn$ is a connected strongly pro-smooth finitely presented locally closed subscheme
(see~\cite[3.3.3]{BKV}) of constant codimension (see~\cite[Lemma 2.2.10]{BKV}). As in~\cite[3.4.4(d)]{BKV}, we set
$a^+_{w,\br}:=\codim_{\C{L}^+(\ft_w)_\tn}(\ft_{w,\br})$.

(c) Using~\rl{basic}(b), we have a natural projection
$\C{Y}_{w,\br}^x\to[I\bs\C{Y}_{w,\br}^x]\simeq [I(x)\bs \C{I}(x)^+_{w,\br}]$. Denote by $\un{b}(x)^+_{w,r}|_{\C{Y}^x_{w,\br}}$ the pullback of the codimension function $\un{b}(x)^+_{w,r}:[I(x)\bs \C{I}(x)^+_{w,\br}]\to \B{Z}$ (see~\re{bx+}(c)).
\end{Emp}

\begin{Emp} \label{E:remmin}
{\bf Remark.} Since $\C{L}^+(\ft_w)_\tn$ is irreducible, it has a unique open dense stratum $\ft_{w,\br}$, while all other strata are of positive
codimension. Therefore a class $[w,\br]\in\fP$ is {\em minimal} if and only if $a^+_{w,\br}=0$.
\end{Emp}

\begin{Lem} \label{L:dim}
We have an equality
\[
\un{\dim}(\C{Y}^x_{w,\br}/\C{I}^+_{w,\br})=\dt_{w,\br}+a^+_{w,\br}-(\un{b}(x)^+_{w,r}|_{\C{Y}^x_{w,\br}}).
\]
\end{Lem}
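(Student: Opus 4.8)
The strategy is to decompose the projection $\C{Y}^x_{w,\br}\to\C{I}^+_{w,\br}$ through intermediate spaces where the dimension function can be computed by the additivity property~\re{propdim}(a), and to match each contribution with one of the three terms on the right-hand side. First I would factor the map as
\[
\C{Y}^x_{w,\br}\lra \C{Y}^x\times_{\C{I}^+}\C{I}^+_{w,\br}\lra \C{I}^+_{w,\br},
\]
where the second arrow is the base change of $\C{Y}^x\to\C{I}^+$; by~\rl{basic}(c) it is affine finitely presented of constant dimension $0$, so by additivity it contributes nothing and we are reduced to computing $\un{\dim}$ of the first arrow, i.e.\ of the locally closed embedding of $\C{Y}^x_{w,\br}$ into the $w,\br$-restricted part of $\C{Y}^x$. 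Equivalently, passing to $I$-quotients (legitimate by~\re{bex2}, since $I$ is strongly pro-smooth acting with the evident equivariance) and using the identifications of~\rl{basic}(b), it suffices to compute the dimension of the map
\[
[I(x)\bs\C{I}(x)^+_{w,\br}]\lra [I(x)\bs\C{I}^+_{w,\br}],
\]
or rather to relate $\un{\dim}(\C{Y}^x_{w,\br}/\C{I}^+_{w,\br})$ to the codimension functions $\un b(x)^+_{w,\br}$ and $a^+_{w,\br}$ plus the ``expected dimension'' $\dt_{w,\br}$.

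Second, I would bring in the passage to $\ft_{w,\br}$ outlined in~\re{strategy}(d): there is an embedding $\ft_{w,\br}\hra\CN_{w,\br}$ (unique up to $\C{L}G$-conjugacy) such that $\ft_{w,\br}\to[\C{L}G\bs\CN_{w,\br}]$ is surjective, and similarly a compatible embedding $\ft_{w,\br}\hra\C{I}^+_{w,\br}$. Since dimension functions are insensitive to $\C{L}G$-equivariant base change along surjections onto the quotient stack, I can compute $\un{\dim}(\C{Y}^x_{w,\br}/\C{I}^+_{w,\br})$ after pulling back to $\ft_{w,\br}$. Now the key numerical inputs become available: by~\re{form}(b), $\ft_{w,\br}\subset\C{L}^+(\ft_w)_\tn$ has constant codimension $a^+_{w,\br}$, and by the Bezrukavnikov--Kazhdan--Lusztig formula~\re{main}(d) together with~\cite{GKM},~\cite{KL}, the affine Springer fiber over a regular semisimple $\gm\in\CN_{w,\br}$ has constant dimension $\dt_{w,\br}$. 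Combining the chain of additive contributions — $\dt_{w,\br}$ from the (generic) fiber dimension of the affine Springer resolution restricted to the $w,\br$-stratum, $a^+_{w,\br}$ from the codimension defect of $\ft_{w,\br}$ inside the topologically nilpotent locus, and $-\un b(x)^+_{w,\br}$ from the codimension of $\C{I}(x)^+_{w,\br}$ inside $\C{I}(x)^+$ (i.e.\ the obstruction measuring failure of $\pi(x)=[w,\br]$) — yields the claimed formula.

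Third, the bookkeeping of signs and the identification of the ``$\dt_{w,\br}$'' term is where care is needed. Concretely I would write $\C{Y}^x$ as fitting into $\wt{\CN}\times_\CN\C{I}^+$ and use that the first projection $\wt{\CN}\times_\CN\C{I}^+\to\C{I}^+$ has fibers the affine Springer fibers $\Fl_\gm$; over the regular semisimple locus these are equidimensional locally of finite type of dimension $\dt_{w,\br}$ on the $w,\br$-stratum, giving the $+\dt_{w,\br}$. The restriction from $\Fl$ to $\Fl^x=Ix$ is then responsible, after the quotient identifications of~\rl{basic}(b), for replacing $\C{I}^+$ by $\C{I}(x)^+$ and hence for the $-\un b(x)^+_{w,\br}$ term via~\re{bx+}. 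The $+a^+_{w,\br}$ term enters precisely when one compares the base $\C{I}^+_{w,\br}$ (as a stratum of $\C{I}^+$) with $\ft_{w,\br}$ (as a stratum of $\C{L}^+(\ft_w)_\tn$), using that codimensions of GKM strata in $\C{I}^+$ and in $\C{L}^+(\ft_w)_\tn$ differ by $a^+_{w,\br}$ (cf.\ the Remark following~\re{bx+} and~\cite[3.4.4]{BKV}).

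\textbf{Main obstacle.} The genuinely delicate point is justifying that the dimension-theoretic computation, which a priori only makes sense for (locally) finitely presented morphisms of placid algebraic spaces or stacks, actually applies here: one must verify that after restricting to $\ft_{w,\br}$ and taking the quotient by the appropriate discrete group $\La'$ (as in~\re{strategy}(d)), the reduced ind-scheme $(\St^x_{\ft,w,\br})_\red$ — equivalently the relevant piece of $(\C{Y}^x_{\ft,w,\br})_\red$ — is a scheme finitely presented over $\ft_{w,\br}$, so that~\rl{dimfib} and the additivity~\re{propdim}(a) are legitimately invoked, and that passing to $\red$ and quotienting by $\La'$ does not alter the dimension function (this is where~\re{propdim}(b) and~\re{red} are used). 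Granting the structural results of~\cite{BKV} recalled in Sections~1--2 and the finiteness facts from~\cite{GKM},~\cite{KL},~\cite{Be}, the remaining work is the additivity bookkeeping sketched above.
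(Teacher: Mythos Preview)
Your proposal conflates~\rl{dim} with the later~\rp{dimform}, importing machinery (base change to $\ft_{w,\br}$, affine Springer fiber dimensions via~\cite{Be}, reduced ind-schemes, the lattice $\La'$) that belongs to the proof of~\rp{dimform} and is unnecessary here. In the paper,~\rl{dim} is a short formal computation entirely inside the placid world of $\C{I}^+$ and $\C{Y}^x$; it then serves as an \emph{input} to~\rp{dimform}, not the other way around. Trying to extract $\dt_{w,\br}$ from Springer-fiber dimensions would moreover require knowing that $\un{\dim}$ agrees with the fiber dimension, i.e.\ an openness/flatness statement of the kind supplied by~\rl{dimfib}(b), and this is exactly what~\rp{dimform} and the surrounding material are set up to justify --- so your route is circular as stated.

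There are two concrete gaps. First, your opening factorization is vacuous: by definition $\C{Y}^x_{w,\br}$ is the preimage of $\C{I}^+_{w,\br}$ in $\C{Y}^x$, i.e.\ $\C{Y}^x_{w,\br}=\C{Y}^x\times_{\C{I}^+}\C{I}^+_{w,\br}$, so your ``first arrow'' is an isomorphism and the factorization as written yields $\un{\dim}(\C{Y}^x_{w,\br}/\C{I}^+_{w,\br})=0$. Second, you miss the actual source of $\dt_{w,\br}+a^+_{w,\br}$: it is not assembled from a Springer-fiber contribution $\dt_{w,\br}$ plus the codimension $a^+_{w,\br}$ of $\ft_{w,\br}$ in $\C{L}^+(\ft_w)_\tn$, but comes in one piece as $\codim_{\C{I}^+}(\C{I}^+_{w,\br})=\dt_{w,\br}+a^+_{w,\br}$, established in~\cite[Corollaries 3.4.5 and 3.4.9]{BKV}. (Your side remark that the codimensions of GKM strata in $\C{I}^+$ and in $\C{L}^+(\ft_w)_\tn$ differ by $a^+_{w,\br}$ is also off: the latter codimension \emph{is} $a^+_{w,\br}$, so the difference is $\dt_{w,\br}$.) The paper's proof simply applies additivity~\re{propdim}(a) to the two compositions $\C{Y}^x_{w,\br}\hra\C{Y}^x\to\C{I}^+$ and $\C{Y}^x_{w,\br}\to\C{I}^+_{w,\br}\hra\C{I}^+$, obtaining
\[
\un{\dim}(\C{Y}^x_{w,\br}/\C{I}^+_{w,\br})=\codim_{\C{I}^+}(\C{I}^+_{w,\br})+\un{\dim}(\C{Y}^x/\C{I}^+)|_{\C{Y}^x_{w,\br}}-\un{\codim}_{\C{Y}^x}(\C{Y}^x_{w,\br}),
\]
and reads off the three terms as $(\dt_{w,\br}+a^+_{w,\br})+0-\un{b}(x)^+_{w,\br}$ via the cited~\cite{BKV} formula,~\rl{basic}(c), and~\rl{basic}(b) respectively.
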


\begin{proof}
By the additivity of the dimension function (see~\re{propdim}(a)), we have
\[
\un{\dim}(\C{Y}^x_{w,\br}/\C{I}^+)=\un{\dim}(\C{Y}^x_{w,\br}/\C{I}^+_{w,\br})-(\un{\codim}_{\C{I}^+}(\C{I}^+_{w,\br})|_{\C{Y}^x_{w,\br}})
\]
and
\[
\un{\dim}(\C{Y}^x_{w,\br}/\C{I}^+)=
(\un{\dim}(\C{Y}^x/\C{I}^+)|_{\C{Y}^x_{w,\br}})-\un{\codim}_{\C{Y}^x}(\C{Y}^x_{w,\br}).
\]
Thus
\[
\un{\dim}(\C{Y}^x_{w,\br}/\C{I}^+_{w,\br})=(\un{\codim}_{\C{I}^+}(\C{I}^+_{w,\br})|_{\C{Y}^x_{w,\br}})+
(\un{\dim}(\C{Y}^x/\C{I}^+)|_{\C{Y}^x_{w,\br}})-\un{\codim}_{\C{Y}^x}(\C{Y}^x_{w,\br}).
\]
Note that it follows from~\cite[Corollaries 3.4.5 and 3.4.9]{BKV} that the closed
subscheme $\C{I}^+_{w,\br}\subset \C{I}^+$ is of constant codimension $\codim_{\C{I}^+}(\C{I}^+_{w,\br})=\dt_{w,\br}+a^+_{w,\br}$.

Since $\un{\dim}(\C{Y}^x/\C{I}^+)=0$ by~\rl{basic}(c), it suffices to show the equality
\[
\un{\codim}_{\C{Y}^x}(\C{Y}^x_{w,\br})=(\un{b}(x)^+_{w,r}|_{\C{Y}^x_{w,\br}}),
\]
which follows from the observation~\re{bex2} and~\rl{basic}(b).
\end{proof}

Now we are ready to show the first part of Lusztig conjecture.

\begin{Thm} \label{T:min}
For every $x\in W$, the class $\pi(x)=[w,\br]$ is minimal.
\end{Thm}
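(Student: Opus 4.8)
The plan is to combine the dimension formula of \rl{dim} with the fiber-dimension inequality of \rl{dimfib}(a)--(c), applied to the projection $p_{\ft}\colon \C{Y}^x_{\ft,w,\br}\to\ft_{w,\br}$ obtained by pulling back $\C{Y}^x_{w,\br}\to\C{I}^+_{w,\br}$ along the embedding $\ft_{w,\br}\hra\C{I}^+_{w,\br}$ (which exists, uniquely up to $I$-conjugacy, because $\ft_{w,\br}\to[\C{L}G\bs\CN_{w,\br}]$ is surjective as recalled in~\re{strategy}(d)). Since the dimension function is compatible with the passage to quotient stacks (see~\re{bex2}) and with $\C{L}G$-equivariant pullback, \rl{dim} gives
\[
\un{\dim}(\C{Y}^x_{\ft,w,\br}/\ft_{w,\br})=\dt_{w,\br}+a^+_{w,\br}-(\un{b}(x)^+_{w,\br}|_{\C{Y}^x_{\ft,w,\br}}),
\]
and by \rl{basic}(a) the last term is a non-negative function which vanishes identically exactly when $\pi(x)=[w,\br]$.

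The key point is that $\pi(x)=[w,\br]$ is not an arbitrary class: by the very definition in~\re{main}(c) (and \rl{basic}(a)), it is the class for which $\C{I}(x)^+_{w,\br}\subset\C{I}(x)^+$ is open dense, so $\un{b}(x)^+_{w,\br}\equiv 0$ and the formula above reads $\un{\dim}(\C{Y}^x_{\ft,w,\br}/\ft_{w,\br})=\dt_{w,\br}+a^+_{w,\br}$. Now I invoke \rl{dimfib}: after passing to a finitely presented (indeed, after a further free discrete quotient by $\La'$, honestly finite type) model of $p_{\ft}$ over the irreducible base $\ft_{w,\br}$, there is an open dense $U\subset\ft_{w,\br}$ on which the fiber dimension equals $\un{\dim}_{p_{\ft}}$, i.e.\ $\dim\C{Y}^x_{\gm}=\dt_{w,\br}+a^+_{w,\br}$ for $\gm\in U$. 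On the other hand, $\C{Y}^x_{\gm}$ is a locally closed piece of the Steinberg fiber, whose components have dimension at most $\dim\Fl_{\gm}=\dt_{w,\br}$ for $\gm$ regular semisimple in $\CN_{w,\br}$ (see~\re{main}(d) and the Bezrukavnikov--Kazhdan--Lusztig formula). Hence $\dt_{w,\br}+a^+_{w,\br}\le\dt_{w,\br}$, forcing $a^+_{w,\br}=0$, which by \rr{remmin} means precisely that $[w,\br]$ is minimal.

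The routine verifications I would still need to supply are: (i) that $(\C{Y}^x_{\ft,w,\br})_{\red}$ is genuinely a scheme locally finitely presented over $\ft_{w,\br}$ and that the $\La'$-quotient is finite type over $\ft_{w,\br}$, so that \rl{dimfib} genuinely applies — this is the content of~\re{strategy}(d) and should be extracted from~\cite{BKV}; (ii) that forming the reduced structure and the free discrete quotient does not change the relevant dimension functions, which is \re{propdim}(b); and (iii) the identification $\C{Y}^x_{\gm}\cong\Fl^x_{\gm}$ as a locally closed subscheme of $\Fl_{\gm}$, so that the bound $\dim\le\dt_{w,\br}$ is legitimate. The main obstacle is step (i): one must be careful that the infinite-dimensional ind-scheme $\C{Y}^x_{\ft,w,\br}$, a priori only an ind-scheme over an infinite-type base, really becomes, after the slice to $\ft_{w,\br}$ and the $\La'$-quotient, a finite-type scheme over a finite-type base, so that the elementary fiber-dimension lemma is applicable; everything else is bookkeeping with the additivity and pullback properties of $\un{\dim}$ already recorded in Section~2.
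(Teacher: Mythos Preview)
Your argument is correct in outline and uses the same core ingredients as the paper (\rl{dim}, \rl{basic}(a), the Bezrukavnikov--Kazhdan--Lusztig bound, and Remark~\re{remmin}), but you take an unnecessary detour through the slice $\ft_{w,\br}$ and through \rl{dimfib}(c). The paper's proof is considerably shorter: it applies \rl{dimfib}(a) \emph{directly} to the projection $\C{Y}^x_{w,\br}\to\C{I}^+_{w,\br}$, which is already a finitely presented morphism between schemes admitting placid presentations (see~\rl{basic}(c) and the remarks preceding it). Every fiber of this map is a locally closed piece $\Fl^x_{\gm}$ of an affine Springer fiber $\Fl_{\gm}$ with $\gm\in\C{I}^+_{w,\br}$, so has dimension $\leq\dt_{w,\br}$ by~\cite{Be}; hence $\un{\dim}(\C{Y}^x_{w,\br}/\C{I}^+_{w,\br})\leq\dt_{w,\br}$ by the inequality in \rl{dimfib}(a). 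Combined with \rl{dim} and $\un{b}(x)^+_{w,\br}=0$ this gives $a^+_{w,\br}\leq 0$, hence $a^+_{w,\br}=0$.

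The practical upshot is that the paper's route sidesteps all three of your flagged verifications (i)--(iii): there is no need to slice to $\ft_{w,\br}$, no need to pass to reduced structure, no need for the $\La'$-quotient, and no need for the generic-equality part \rl{dimfib}(c) --- the pointwise inequality (a) suffices. Your route can be completed (the pullback of the dimension formula along $\ft_{w,\br}$ is essentially the content of the paper's Section~5, done there for $\St^x$ rather than $\C{Y}^x$, via the placid atlas $\ft_{w,\br}\to[\C{L}G\bs\CN_{w,\br}]_{\red}$), but the justification of your displayed formula $\un{\dim}(\C{Y}^x_{\ft,w,\br}/\ft_{w,\br})=\dt_{w,\br}+a^+_{w,\br}$ is not as immediate as ``compatibility with $\C{L}G$-equivariant pullback'' suggests: $\ft_{w,\br}\hra\C{I}^+_{w,\br}$ is a closed embedding of infinite codimension, and dimension functions do not pull back along arbitrary base change. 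So while nothing is wrong, you are doing substantially more work than required.
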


\begin{proof}
By the formula of Bezrukavnikov--Kazhdan--Lusztig (see~\cite{Be}), all fibers of the projection $\C{Y}_{w,\br}\to\C{I}^+_{w,\br}$ are of dimension
$\dt_{w,\br}$. Therefore all fibers of $\C{Y}^x_{w,\br}\to\C{I}^+_{w,\br}$ are of dimension at most $\dt_{w,\br}$, hence by~\rl{dimfib}(a) we have
$\un{\dim}(\C{Y}^x_{w,\br}/\C{I}^+_{w,\br})\leq \dt_{w,\br}$. It now follows from~\rl{dim} that
$a^+_{w,\br}\leq \un{b}(x)^+_{w,\br}$. Next, since $\pi(x)=[w,\br]$, we conclude by~\rl{basic}(a) that $\un{b}(x)^+_{w,\br}=0$.
Thus $a^+_{w,\br}=0$, hence the class $[w,\br]$ is minimal by Remark~\re{remmin}.
\end{proof}

\section{Proof of Conjecture ~\re{conj}(b) for generic elements}

We continue to fix $x\in W$ and $[w,\br]\in\fP$.



\begin{Emp} \label{E:cons}
{\bf Notation.}
(a) Recall (see~\cite[4.1.5]{BKV}) that element $w\in W_{\fin}$ gives rise to a maximal torus $T_w\subset G_{{\mathsf k}(\!(t)\!)}$,
hence  to an ind-subgroup scheme $\C{L}(T_w)\subset\C{L}G$, both defined uniquely up to conjugacy. Moreover, we have a natural $\C{L}(T_w)$-equivariant embedding $\ft_{w,\br}\hra\CN_{w,\br}$, defined uniquely up to conjugacy, where $\C{L}(T_w)$ acts trivially on $\ft_{w,\br}$.

(b)  We set $\wt{\CN}_{\ft,w,\br}:=\ft_{w,\br}\times_{\CN_{w,\br}}\wt{\CN}_{w,\br}$, and
$\St^x_{\ft,w,\br}:=\ft_{w,\br}\times_{\CN_{w,\br}}\St^x_{w,\br}$. Both $\wt{\CN}_{\ft,w,\br}$ and
$\St^x_{\ft,w,\br}$ are ind-schemes over $\ft_{w,\br}$.

(c) Consider the composition $\pr\colon \St^x\hra \St=\wt{\CN}\times_{\CN}\wt{\CN}\overset{\pr_1}{\lra} \wt{\CN}$. It is $\C{L}G$-equivariant, and therefore induces an $\C{L}G$-equivariant projection $\pr\colon \St^x_{w,\br}\to \wt{\CN}_{w,\br}$, hence an $\C{L}(T_w)$-equivariant projection
$\pr_{\ft}\colon \St^x_{\ft,w,\br}\to\wt{\CN}_{\ft,w,\br}$.

(d) Let $\La_w:=X_*(T_w)$ be the group of cocharacters of $T_w$, defined over ${\mathsf k}(\!(t)\!)$. It is a finitely generated free abelian group, and we have  natural embedding $\La_w\hra \C{L}(T_w),\ \la\mapsto\la(t)$. In particular, the projection $\pr_{\ft}\colon \St^x_{\ft,w,\br}\to \wt{\CN}_{\ft,w,\br}$ from (c) is $\La_w$-equivariant.
\end{Emp}

\begin{Lem} \label{L:propSt}
(a) We have natural isomorphisms  $[\C{L}G\bs\wt{\CN}_{w,\br}]\simeq [I\bs \C{I}_{w,\br}^+]$ and
$[\C{L}G\bs\St^x_{w,\br}]\simeq [I\bs \C{Y}_{w,\br}^x]$.

(b) The quotient stacks $[\C{L}G\bs\wt{\CN}_{w,\br}]$ and $[\C{L}G\bs\St^x_{w,\br}]$
are placid, and the projection
\[
[\pr]\colon [\C{L}G\bs\St^x_{w,\br}]\to[\C{L}G\bs\wt{\CN}_{w,\br}]
\]
is an affine and finitely presented.

(c) The reduced ind-schemes $(\wt{\CN}_{\ft,w,\br})_{\red}$ and
$(\St^x_{\ft,w,\br})_{\red}$ are placid schemes, locally finitely presented over $\ft_{w,\br}$, while the projection
$\pr_{\ft,\red}\colon (\St^x_{\ft,w,\br})_{\red}\to (\wt{\CN}_{\ft,w,\br})_{\red}$ is affine and finitely presented.

(d) The quotients $[\La_w\bs (\wt{\CN}_{\ft,w,\br})_{\red}]$ and $[\La_w\bs (\St^x_{\ft,w,\br})_{\red}]$ are placid algebraic spaces, finitely presented over $\ft_{w,\br}$.
\end{Lem}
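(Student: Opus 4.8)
The plan is to dispatch (a) and (b) as formal consequences of the transitivity of the $\C{L}G$-action and of the results of the previous section, and then to devote the real effort to (c) and (d), whose substance is the Kazhdan--Lusztig theory of affine Springer fibers for regular semisimple elements, organized over the stratum $\ft_{w,\br}$.

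\textbf{Parts (a) and (b).} For (a) I would use the $\C{L}G$-equivariant morphism from $\St^x$ (resp.\ from $\wt{\CN}$) to $\Fl$ remembering the first $\Fl$-coordinate. As $\C{L}G$ acts transitively on $\Fl$ with stabilizer $I$ of the base point, passing to the fiber over that point yields $[\C{L}G\bs\wt{\CN}]\simeq[I\bs\C{Z}]$ and $[\C{L}G\bs\St^x]\simeq[I\bs\C{Z}']$, and inspecting the defining equations gives $\C{Z}=\C{I}^+$ (using $\C{I}^+\subset\CN$) and $\C{Z}'=\C{Y}^x$ (using that $(1,[g])\in(\Fl\times\Fl)^x$ iff $[g]\in Ix=\Fl^x$); taking preimages of the $(w,\br)$-GKM stratum gives the two isomorphisms, the second of which could alternatively be obtained by combining the first with~\rl{basic}(b). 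For (b), the stacks $[I\bs\C{I}^+_{w,\br}]$ and $[I\bs\C{Y}^x_{w,\br}]$ are placid by~\re{basic example}, since $I$ is strongly pro-smooth (as is $I(x)$, cf.~\re{bx+}(c)) and $\C{I}^+_{w,\br},\C{Y}^x_{w,\br}$ admit placid presentations. Combining the isomorphisms of (a) with~\rl{basic}(b) identifies $[\pr]$ with the map $[I(x)\bs\C{I}(x)^+_{w,\br}]\to[I\bs\C{I}^+_{w,\br}]$ induced by the closed finitely presented embedding $\C{I}(x)^+_{w,\br}\hra\C{I}^+_{w,\br}$ and the inclusion $I(x)\subset I$; factoring it as $[I(x)\bs\C{I}(x)^+_{w,\br}]\to[I(x)\bs\C{I}^+_{w,\br}]\to[I\bs\C{I}^+_{w,\br}]$, the first arrow is a closed finitely presented embedding and the second is affine and finitely presented because $I/I(x)\simeq\Fl^x$ is an affine space (cf.\ the proof of~\rl{basic}(c)), so $[\pr]$ is affine and finitely presented.

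\textbf{Parts (c) and (d).} The fiber of $\wt{\CN}_{\ft,w,\br}\to\ft_{w,\br}$ over $\gm$ is the affine Springer fiber $\Fl_\gm$, and that of $\St^x_{\ft,w,\br}\to\ft_{w,\br}$ is the locally closed subscheme $\St^x_\gm\subset\Fl_\gm\times\Fl_\gm$. For $\gm$ regular semisimple---which is the case over $\ft_{w,\br}$---the reduced affine Springer fiber is a scheme locally of finite type by~\cite{KL}, the lattice $\La_w$ acts on it freely (a nonzero $\la\in\La_w$ has $\val\la(t)\neq 0$, hence $\la(t)$ lies in no conjugate of the bounded group $I$) and properly discontinuously with quotient of finite type, and by~\cite{GKM} and~\cite{BKV} all of this holds uniformly in the family over $\ft_{w,\br}$. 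Granting this, $(\wt{\CN}_{\ft,w,\br})_{\red}$ is a scheme locally finitely presented over $\ft_{w,\br}$, hence placid by~\re{plalgsp}(a) since $\ft_{w,\br}$ is placid, and $[\La_w\bs(\wt{\CN}_{\ft,w,\br})_{\red}]$ is an algebraic space finitely presented over $\ft_{w,\br}$, again placid. For the $\St^x$-versions I would use the identification $\St^x_{\ft,w,\br}\simeq\wt{\CN}_{\ft,w,\br}\times_{[\C{L}G\bs\wt{\CN}_{w,\br}]}[\C{L}G\bs\St^x_{w,\br}]$ (the map $[\C{L}G\bs\St^x_{w,\br}]\to[\C{L}G\bs\wt{\CN}_{w,\br}]$ being the stacky quotient of the $\C{L}G$-equivariant projection $\pr$), which exhibits $\pr_{\ft}$ as a pullback of the affine finitely presented morphism $[\pr]$ of (b); hence $\pr_{\ft}$ is affine and finitely presented, so $(\St^x_{\ft,w,\br})_{\red}$ is a scheme, affine and finitely presented over $(\wt{\CN}_{\ft,w,\br})_{\red}$ and hence a placid scheme locally finitely presented over $\ft_{w,\br}$, $\pr_{\ft,\red}$ is affine and finitely presented, and---since $\pr_{\ft}$ is $\La_w$-equivariant by~\re{cons}(d)---passing to $\La_w$-quotients gives the remaining part of (d).

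\textbf{The main obstacle.} The one genuinely nonformal point is the family version of Kazhdan--Lusztig finiteness over the infinite-dimensional base $\ft_{w,\br}$: one must show that $(\wt{\CN}_{\ft,w,\br})_{\red}$ (and then $(\St^x_{\ft,w,\br})_{\red}$) is an honest scheme, not merely an ind-scheme, locally finitely presented over $\ft_{w,\br}$, and that after the $\La_w$-quotient it becomes \emph{finitely} presented over $\ft_{w,\br}$---i.e.\ that the finite-type-modulo-lattice structure of a single affine Springer fiber deforms uniformly over the stratum. Once this is in hand, everything else is bookkeeping with quotient stacks, $\La_w$-torsors, and the placidity and dimension formalism recalled in the first two sections.
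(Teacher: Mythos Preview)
Your proposal is correct and follows essentially the same approach as the paper: (a) via transitivity of $\C{L}G$ on $\Fl$ and identification of fibers, (b) via~\re{basic example} and~\rl{basic}(c), (c)--(d) by invoking the family Kazhdan--Lusztig finiteness result~\cite[Theorem~4.3.3]{BKV} for the $\wt{\CN}$-side and then pulling back the affine finitely presented $[\pr]$ of (b) to handle the $\St^x$-side. The only cosmetic differences are that the paper cites~\rl{basic}(c) directly for (b) rather than rederiving the factorization through $[I(x)\bs\C{I}(x)^+_{w,\br}]$, and writes the fiber products~\form{fiberproduct} over $[\C{L}G\bs\CN_{w,\br}]$ rather than over $[\C{L}G\bs\wt{\CN}_{w,\br}]$; your identification of the ``main obstacle'' as precisely the input of~\cite[Theorem~4.3.3]{BKV} is exactly right.
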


\begin{proof}
(a) follows from the observation that both $\pr\colon \St^x\to\wt{\CN}$ from~\re{cons}(c) and the projection $\wt{\CN}\to\Fl$ are
$\C{L}G$-equivariant, and the fiber of $\pr$ over $[1]\in \Fl$ is the projection
$\C{Y}^x\to\C{I}^+$.

(b) Since the projection $\C{Y}^x\to\C{I}^+$ and its pullback $\C{Y}_{w,\br}^x\to\C{I}_{w,\br}^+$ are affine and finitely presented
(by~\rl{basic}(c)), all assertions follows from ~\re{basic example} and the statement and the proof of (a).

(c) It follows from~\cite[Theorem 4.3.3]{BKV}, that $(\wt{\CN}_{\ft,w,\br})_{\red}$ is a scheme, locally finitely presented over $\ft_{w,\br}$.
Since $\ft_{w,\br}$ is placid (compare~\re{form}), we conclude that $(\wt{\CN}_{\ft,w,\br})_{\red}$ is a placid scheme by~\re{plalgsp}(a).
Next, using identifications
\begin{equation} \label{Eq:fiberproduct}
\wt{\CN}_{\ft,w,\br}\simeq\ft_{w,\br}\times_{[\C{L}G\bs \CN_{w,\br}]}[\C{L}G\bs\wt{\CN}_{w,\br}]\text{ and }
\St^x_{\ft,w,\br}\simeq\ft_{w,\br}\times_{[\C{L}G\bs\CN_{w,\br}]}[\C{L}G\bs\St^x_{w,\br}],
\end{equation}
we deduce from (b) that the projection $\pr_{\ft}\colon \St^x_{\ft,w,\br}\to \wt{\CN}_{\ft,w,\br}$ is affine and finitely presented. Therefore the remaining assertions follow from \re{red}(a)
and~\re{plalgsp}(a).

(d) By~\cite[Theorem 4.3.3]{BKV}, the quotient $[\La_w\bs(\wt{\C{N}}_{\ft,w,\br})_{\red}]$ is an algebraic space, finitely presented over $\ft_{w,\br}$. Therefore is placid by~\re{plalgsp}(a). Moreover, we conclude from (c) that the projection
$[\La_w\bs (\St^x_{\ft,w,\br})_{\red}]\to[\La_w\bs (\wt{\CN}_{\ft,w,\br})_{\red}]$ is affine and finitely presented, which implies that
$[\La_w\bs (\St^x_{\ft,w,\br})_{\red}]$ is a placid algebraic space, finitely presented over $\ft_{w,\br}$.
\end{proof}

\begin{Emp} \label{E:remschemes}
{\bf Remark.}
By~\cite[Corollary 4.3.4(a)]{BKV}, there exists a subgroup of finite
index $\La'_w\subset\La_w$ such that the quotient
$[\La'_w\bs(\wt{\C{N}}_{\ft,w,\br})_{\red}]$ is a scheme finitely presented over $\ft_{w,\br}$.
Thus, using~\rl{propSt}(c), one deduces that the quotient
$[\La'_w\bs(\St^x_{\ft,w,\br})_{\red}]$ is a scheme finitely presented over $\ft_{w,\br}$ as well.
In particular, for the purpose of this work we could restrict ourselves to schemes instead of algebraic spaces.
\end{Emp}

\begin{Emp} \label{E:notproj}
{\bf Notation.}
(a) Composing isomorphisms of~\rl{propSt}(a) and~\rl{basic}(b), we get an isomorphism
$[\C{L}G\bs\St^x_{w,\br}]\simeq  [I\bs\C{Y}^x_{w,\br}]\simeq[I(x)\bs \C{I}(x)^+_{w,\br}]$.

(b) By (a), we have a natural projection
$\St^x_{\ft,w,\br}\to\St^x_{w,\br}\to [\C{L}G\bs\St^x_{w,\br}]\simeq [I(x)\bs \C{I}(x)^+_{w,\br}]$.
Therefore we can consider the pullback $\un{b}(x)^+_{w,\br}|_{(\St^x_{\ft,w,\br})_{\red}}$ (see~\re{bx+}(c)).

(c) For every $\wt{\gm}\in \St^x_{w,\br}$, we denote its image in $[I(x)\bs \C{I}(x)^+_{w,\br}]$ by $[\wt{\gm}]$.
\end{Emp}

The following assertion is the main technical result of this work.

\begin{Prop} \label{P:dimform}
We have an equality
\[
\un{\dim}((\St^x_{\ft,w,\br})_{\red}/\ft_{w,\br})=2\dt_{w,\br}+a^+_{w,\br}-(\un{b}(x)^+_{w,\br}|_{(\St^x_{\ft,w,\br})_{\red}}).
\]
\end{Prop}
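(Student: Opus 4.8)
The plan is to factor the projection $p_{\ft,\red}\colon(\St^x_{\ft,w,\br})_{\red}\to\ft_{w,\br}$ through the reduced affine Springer fibration,
\[
(\St^x_{\ft,w,\br})_{\red}\xrightarrow{\pr_{\ft,\red}}(\wt{\CN}_{\ft,w,\br})_{\red}\xrightarrow{q}\ft_{w,\br},
\]
all three arrows being (locally) finitely presented morphisms of placid algebraic spaces by~\rl{propSt}(c), and then to compute the two factors separately. By additivity of the dimension function (\re{propdim}(a)) one has $\un{\dim}_{p_{\ft,\red}}=\un{\dim}_{\pr_{\ft,\red}}+\pr_{\ft,\red}^{*}(\un{\dim}_{q})$, so it suffices to establish $\un{\dim}_{q}\equiv\dt_{w,\br}$ and $\un{\dim}_{\pr_{\ft,\red}}=\dt_{w,\br}+a^+_{w,\br}-(\un{b}(x)^+_{w,\br}|_{(\St^x_{\ft,w,\br})_{\red}})$.

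For the first equality, the fibres of $q$ are the reduced affine Springer fibres $(\Fl_\gm)_{\red}$, $\gm\in\ft_{w,\br}$, which are equidimensional by~\cite{KL} and of dimension $\dt_{w,\br}$ by the Bezrukavnikov--Kazhdan--Lusztig formula (see~\re{main}(d) and~\cite{Be}); hence $\un{\dim}_{q}\le\dt_{w,\br}$ by~\rl{dimfib}(a), and the reverse inequality amounts to the equidimensionality of $(\wt{\CN}_{\ft,w,\br})_{\red}$ over the irreducible base $\ft_{w,\br}$, which follows from the structural description of this space in~\cite[\S 4]{BKV} (see in particular~\cite[Theorem 4.3.3, Corollary 4.3.4]{BKV}). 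For the second equality, the identifications~\form{fiberproduct} exhibit $\St^x_{\ft,w,\br}$ as the base change of $[\pr]\colon[\C{L}G\bs\St^x_{w,\br}]\to[\C{L}G\bs\wt{\CN}_{w,\br}]$ along the morphism $(\wt{\CN}_{\ft,w,\br})_{\red}\to[\C{L}G\bs\wt{\CN}_{w,\br}]$, which is obtained by base change from the atlas $\ft_{w,\br}\to[\C{L}G\bs\CN_{w,\br}]$ of~\re{cons}(a) and is therefore itself a placid atlas; consequently, by the very definition of the dimension function (\re{dimplst}(c)) together with its compatibility with reduction (\re{propdim}(b)), the function $\un{\dim}_{\pr_{\ft,\red}}$ is the pullback of $\un{\dim}_{[\pr]}$. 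Finally, $[\pr]$ is identified with $[I\bs\C{Y}^x_{w,\br}]\to[I\bs\C{I}^+_{w,\br}]$ by~\rl{propSt}(a), so by~\re{bex2} the function $\un{\dim}_{[\pr]}$ is the one induced by $\un{\dim}(\C{Y}^x_{w,\br}/\C{I}^+_{w,\br})$, which equals $\dt_{w,\br}+a^+_{w,\br}-(\un{b}(x)^+_{w,\br}|_{\C{Y}^x_{w,\br}})$ by~\rl{dim}; chasing the pullback of the codimension function $\un{b}(x)^+_{w,\br}$ through the identifications of~\rl{propSt}(a) and~\re{notproj}(a)--(b) gives the asserted value of $\un{\dim}_{\pr_{\ft,\red}}$, and adding the two factors proves the proposition.

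The step I expect to be the real content is the \emph{constancy} $\un{\dim}_{q}\equiv\dt_{w,\br}$, not merely the inequality: \rl{dimfib}(a),(c) give for free only that $\un{\dim}_{q}\le\dt_{w,\br}$ everywhere and $=\dt_{w,\br}$ over some dense open of $\ft_{w,\br}$, and promoting this to equality on all of $(\wt{\CN}_{\ft,w,\br})_{\red}$ is exactly the equidimensionality of that space over $\ft_{w,\br}$, which is the substantive input drawn from~\cite{BKV} (consistently, this is also the degenerate case $x=1$ of the proposition, where $\St^x=\wt{\CN}$). A secondary point to nail down is that $(\wt{\CN}_{\ft,w,\br})_{\red}\to[\C{L}G\bs\wt{\CN}_{w,\br}]$ is genuinely a placid atlas -- i.e.\ that $\ft_{w,\br}\to[\C{L}G\bs\CN_{w,\br}]$ is smooth and not merely surjective -- so that~\re{dimplst}(c) delivers $\un{\dim}_{\pr_{\ft,\red}}$ as the pullback of $\un{\dim}_{[\pr]}$. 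Granting these two facts, the remaining steps are routine bookkeeping with additivity of dimension functions and the identifications of~\rl{propSt} and~\rl{dim}.
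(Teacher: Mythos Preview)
Your proposal is correct and follows essentially the same route as the paper: factor through $(\wt{\CN}_{\ft,w,\br})_{\red}$, use additivity, compute the second factor by pulling back $\un{\dim}_{[\pr]}$ along a placid atlas and invoking~\rl{dim}, and compute the first factor using the Bezrukavnikov--Kazhdan--Lusztig fiber dimension together with input from~\cite[\S4]{BKV}. Your identification of the two substantive points (constancy of $\un{\dim}_q$ and the atlas property of $\ft_{w,\br}\to[\C{L}G\bs\CN_{w,\br}]_{\red}$) is exactly right; the paper cites~\cite[Corollary 4.1.12]{BKV} for the latter and handles the Cartesianness after reduction via~\cite[1.4.1(e)]{BKV}.

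One small refinement: your argument for $\un{\dim}_q\equiv\dt_{w,\br}$ is phrased as ``$\le$ from~\rl{dimfib}(a), then $\ge$ from equidimensionality over the base,'' but the second half is a bit imprecise in the placid setting (what equidimensionality of an infinite-dimensional space over $\ft_{w,\br}$ should mean, and why it yields the reverse inequality, is not immediate). The paper's argument is more direct: by~\cite[Corollary 4.3.4(c)]{BKV} the projection $[\La_w\bs(\wt{\CN}_{\ft,w,\br})_{\red}]\to\ft_{w,\br}$ is \emph{open}, so~\rl{dimfib}(b) gives equality $\un{\dim}_q=\dim_z q^{-1}(q(z))=\dt_{w,\br}$ at every point in one stroke, with no separate upper/lower bound argument needed. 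You are citing the correct reference; just invoke part~(b) rather than~(a).
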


Before giving the proof of~\rp{dimform}, we are going to explain how Lusztig's conjecture~\re{conj}(b) for
generic elements follows from it.

\begin{Cor} \label{C:generic1}
(a) There exists an open dense subscheme ${}^x\ft_{w,\br}\subset \ft_{w,\br}$ such that the function $\gm\mapsto\dim\St_{\gm}^x$ is
constant on ${}^x\ft_{w,\br}$, and for every $\gm\in {}^x\ft_{w,\br}$ and
$\wt{\gm}\in \St^x_{\gm}$ we have an equality
\[
\dim_{\wt{\gm}}\St^x_{\gm}=2\dt_{w,\br}+a^+_{w,\br}-\un{b}(x)^+_{w,\br}([\wt{\gm}]).
\]

(b) If $[w,\br]$ is minimal and $\pi(x)=[w,\br]$, then for every $\gm\in{}^x\ft_{w,\br}$, the fiber $\St_{\gm}^x$ is non-empty and equidimensional of dimension $2\dt_{w,\br}$.
\end{Cor}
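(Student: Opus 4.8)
The plan is to descend the projection $\St^x_{\ft,w,\br}\to\ft_{w,\br}$ to a finitely presented morphism of placid algebraic spaces and then feed it into~\rl{dimfib}. First I would pass to the quotient by $\La_w$: by~\rl{propSt}(d) the structure map $f\colon X:=[\La_w\bs(\St^x_{\ft,w,\br})_{\red}]\to\ft_{w,\br}$ is a finitely presented morphism of placid algebraic spaces, and the quotient map $q\colon(\St^x_{\ft,w,\br})_{\red}\to X$ is \'etale (the $\La_w$-action being free and discrete; cf.~\rl{propSt}(d)), so $\un{\dim}_q\equiv 0$ and, by additivity of the dimension function (\re{propdim}(a)), $q^*(\un{\dim}_f)=\un{\dim}\bigl((\St^x_{\ft,w,\br})_{\red}/\ft_{w,\br}\bigr)$. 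Since $\ft_{w,\br}$ is irreducible (\re{form}(b)), applying~\rl{dimfib}(c) to $f$ produces an open dense subscheme ${}^x\ft_{w,\br}\subset\ft_{w,\br}$ on which $\gm\mapsto\dim f^{-1}(\gm)$ is locally constant, hence constant, and over which $\un{\dim}_f(z)=\dim_z f^{-1}(f(z))$ for all $z$; this is the subscheme I would take. Because $f^{-1}(\gm)=[\La_w\bs(\St^x_\gm)_{\red}]$, we have $\dim f^{-1}(\gm)=\dim\St^x_\gm$, which gives the constancy of $\gm\mapsto\dim\St^x_\gm$ in~(a).

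For the pointwise formula in~(a), fix $\gm\in{}^x\ft_{w,\br}$ and $\wt{\gm}\in\St^x_\gm$, and put $z:=q(\wt{\gm})\in X$. Combining the identifications above with~\rl{dimfib}(c),
\[
\dim_{\wt{\gm}}\St^x_\gm=\dim_z f^{-1}(\gm)=\un{\dim}_f(z)=\un{\dim}\bigl((\St^x_{\ft,w,\br})_{\red}/\ft_{w,\br}\bigr)(\wt{\gm}),
\]
and by~\rp{dimform} the right-hand side equals $2\dt_{w,\br}+a^+_{w,\br}-\un{b}(x)^+_{w,\br}([\wt{\gm}])$, which is precisely the equality claimed in~(a).

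For~(b) I would specialize the two inputs. If $[w,\br]$ is minimal then $a^+_{w,\br}=0$ by Remark~\re{remmin}, and if $\pi(x)=[w,\br]$ then $\un{b}(x)^+_{w,\br}\equiv 0$ by~\rl{basic}(a); so~(a) forces $\dim_{\wt{\gm}}\St^x_\gm=2\dt_{w,\br}$ for every $\gm\in{}^x\ft_{w,\br}$ and every $\wt{\gm}\in\St^x_\gm$, i.e.\ $\St^x_\gm$ is equidimensional of dimension $2\dt_{w,\br}$ as soon as it is non-empty. Non-emptiness I would deduce from~\rl{dimfib}(d) applied to $f\colon X\to\ft_{w,\br}$. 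First, $X\ne\emptyset$: since $\pi(x)=[w,\br]$, the GKM stratum $\C{I}(x)^+_{w,\br}$ is open dense in $\C{I}(x)^+$, in particular non-empty (\rl{basic}(a)), so $\St^x_{w,\br}\ne\emptyset$ via the isomorphism $[\C{L}G\bs\St^x_{w,\br}]\simeq[I(x)\bs\C{I}(x)^+_{w,\br}]$ of~\re{notproj}(a); as $\St^x_{w,\br}$ is $\C{L}G$-invariant and $\ft_{w,\br}\to[\C{L}G\bs\CN_{w,\br}]$ is surjective (\re{strategy}(d)), the fibre product $\St^x_{\ft,w,\br}=\ft_{w,\br}\times_{\CN_{w,\br}}\St^x_{w,\br}$, hence $X$, is non-empty. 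Second, the inequality of~\rl{dimfib}(a) is everywhere an equality on $X$: indeed $q^*(\un{\dim}_f)=\un{\dim}\bigl((\St^x_{\ft,w,\br})_{\red}/\ft_{w,\br}\bigr)$ is the constant $2\dt_{w,\br}$ by~\rp{dimform} (using $a^+_{w,\br}=0$, $\un{b}(x)^+_{w,\br}\equiv0$) and $q$ is surjective, so $\un{\dim}_f\equiv 2\dt_{w,\br}$; on the other hand, for $z\in X$ over $\gm$ we have $\dim_z f^{-1}(f(z))\le\dim f^{-1}(\gm)=\dim\St^x_\gm\le\dim\St_\gm=2\dt_{w,\br}$ because $\St^x_\gm$ is a subscheme of $\St_\gm=\Fl_\gm\times\Fl_\gm$ (\re{main}(d)); combined with $\un{\dim}_f(z)\le\dim_z f^{-1}(f(z))$ from~\rl{dimfib}(a) this forces equality. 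Then~\rl{dimfib}(d) yields $f^{-1}(\gm)\ne\emptyset$, i.e.\ $\St^x_\gm\ne\emptyset$, for every $\gm\in{}^x\ft_{w,\br}$, which completes~(b).

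I expect the main obstacle to be not a single hard step but the careful bookkeeping of dimension functions under the \'etale covering $q$ — one must be certain that the finitely presented morphism $X\to\ft_{w,\br}$ faithfully records the fibre dimensions of the infinite-type morphism $(\St^x_{\ft,w,\br})_{\red}\to\ft_{w,\br}$, which is what allows~\rl{dimfib} to be invoked — together with the slightly delicate verification in~(b) that both hypotheses of~\rl{dimfib}(d) hold: that the total space is non-empty, and that the fibre-dimension inequality of~\rl{dimfib}(a) is an equality everywhere. The latter is exactly the place where the vanishings $a^+_{w,\br}=0$ and $\un{b}(x)^+_{w,\br}\equiv 0$ must be combined with the bound $\dim\St_\gm=2\dt_{w,\br}$.
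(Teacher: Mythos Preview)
Your proof is correct and follows essentially the same route as the paper: pass to the $\La_w$-quotient to obtain a finitely presented morphism $f\colon[\La_w\bs(\St^x_{\ft,w,\br})_{\red}]\to\ft_{w,\br}$, apply~\rl{dimfib}(c) to produce ${}^x\ft_{w,\br}$, and combine with~\rp{dimform}; for~(b), use the vanishings $a^+_{w,\br}=0$ and $\un{b}(x)^+_{w,\br}\equiv0$, bound the fibre dimension by $\dim\St_\gm=2\dt_{w,\br}$, and invoke~\rl{dimfib}(d). Your explicit verification that $X\ne\emptyset$ (via $\C{I}(x)^+_{w,\br}\ne\emptyset$ and surjectivity of $\ft_{w,\br}\to[\C{L}G\bs\CN_{w,\br}]$) is a detail the paper leaves implicit, but otherwise the arguments coincide.
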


\begin{proof}
(a) Recall that the projection $f\colon [\La_w\bs(\St^x_{\ft,w,\br})_{\red}]\to\ft_{w,\br}$ is
finitely presented by~\rl{propSt}(d), and let ${}^x\ft_{w,\br}\subset \ft_{w,\br}$ be the largest open
dense subset satisfying the condition of~\rl{dimfib}(c) for $f$. For every $\wt{\gm}\in \St^x_{\gm}$, let $\wt{\gm}'$ be the projection of $\wt{\gm}$ to  $[\La_w\bs(\St^x_{\ft,w,\br})_{\red}]$.

Then we have a sequence of equalities
\[
\dim_{\wt{\gm}}\St^x_{\gm}=\dim_{\wt{\gm}'}f^{-1}(\gm)=\un{\dim}_f(\wt{\gm}')=\un{\dim}((\St^x_{\ft,w,\br})_{\red}/\ft_{w,\br})(\wt{\gm})=
2\dt_{w,\br}+a^+_{w,\br}-\un{b}(x)^+_{w,\br}([\wt{\gm}]),
\]
where

\quad $\bullet$ the first equality follows from the identification
$[\La_w\bs\St_{\gm}^x]\simeq f^{-1}(\gm)_{\red}$;

\quad $\bullet$ the second one follows from the assumption on ${}^x\ft_{w,\br}$;

\quad $\bullet$ the third equality is clear;

\quad $\bullet$ the last one follows by~\rp{dimform}.

(b) If $\pi(x)=[w,\br]$ is minimal, then $a^+_{w,\br}=0$ (by~Remark~\re{remmin}) and
$\un{b}(x)^+_{w,\br}=0$ (by~\rl{basic}(a)). In this case, assertion~(a) implies that for every
$\gm\in{}^x\ft_{w,\br}$, the fiber $\St_{\gm}^x$ is either equidimensional of dimension
$2\dt_{w,\br}$ or empty. Thus it remains to show that each  $\St_{\gm}^x$ is non-empty. Equivalently,
in the notation of the proof of~(a) it remains to show that for each $\gm\in{}^x\ft_{w,\br}$,
the fiber $f^{-1}(\gm)_{\red}=[\La_w\bs\St_{\gm}^x]$ is non-empty.

Since $\ft_{w,\br}$ is irreducible, it remains to show that
$\dim_{\wt{\gm}'}f^{-1}(\gm)\leq\un{\dim}_{f}(\wt{\gm}')$ for every
$\gm\in\ft_{w,\br}$ and $\wt{\gm}'\in f^{-1}(\gm)$ (by~\rl{dimfib}(a),(d)). Arguing as in~(a),
it suffices to show that
\[
\dim_{\wt{\gm}}\St^x_{\gm}\leq\un{\dim}((\St^x_{\ft,w,\br})_{\red}/\ft_{w,\br})(\wt{\gm}).
\]

But this follows from the fact that the RHS equals $2\dt_{w,\br}$ by~\rp{dimform}, and the LHS is at most
$\dim\St_{\gm}=2\dt_{w,\br}$.
\end{proof}

\begin{Emp}
{\bf Notation.}
(a) Let ${}^x\ft_{w,\br}\subset \ft_{w,\br}$ be the largest open subscheme satisfying the property
of~\rco{generic1}(a). Since
the map $\C{L}\chi\colon \ft_{w,\br}\to\fc_{w,\br}$ is finite \'etale (see~\cite[3.3.4(c)]{BKV}), the image ${}^x\fc_{w,\br}:=\C{L}\chi({}^x\ft_{w,\br})$ is an open dense subscheme of $\fc_{w,\br}$.

(b) We set  ${}^x\CN_{w,\br}:=\C{L}\chi^{-1}({}^x\fc_{w,\br})\subset \CN_{w,\br}$.
\end{Emp}

\begin{Cor} \label{C:generic}
(a) For every $\gm\in {}^x\CN_{w,\br}$ and $\wt{\gm}\in \St^x_{\gm}$ we have an equality
\[
\dim_{\wt{\gm}}\St^x_{\gm}=2\dt_{w,\br}+a^+_{w,\br}-\un{b}(x)^+_{w,\br}([\wt{\gm}]).
\]

(b) If $[w,\br]$ is minimal, and $\pi(x)=[w,\br]$, then the fiber $\St_{\gm}^x$ is non-empty and equidimensional of dimension
$2\dt_{w,\br}$.
\end{Cor}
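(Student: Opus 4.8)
The plan is to deduce \rco{generic} from \rco{generic1} by descent along $\C{L}G$-orbits: the locus $\CN_{w,\br}$ is swept out by the $\C{L}G$-translates of $\ft_{w,\br}$, and every term occurring in the asserted equality is $\C{L}G$-invariant, so it suffices to know the statement on ${}^x\ft_{w,\br}$, which is exactly \rco{generic1}.

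First I would record the relevant invariance. Since $\St^x\subset\CN\times\Fl\times\Fl$ is $\C{L}G$-equivariant, for $g\in\C{L}G$ the action of $g$ gives an isomorphism of reduced fibers $\St^x_\gm\isom\St^x_{\Ad_g(\gm)}$; hence $\dim_{\wt\gm}\St^x_\gm$ depends only on the $\C{L}G$-orbit of $(\gm,\wt\gm)$. Moreover, by construction (see~\re{notproj}) the point $[\wt\gm]\in[I(x)\bs\C{I}(x)^+_{w,\br}]\simeq[\C{L}G\bs\St^x_{w,\br}]$ depends only on the $\C{L}G$-orbit of $\wt\gm$, so the above isomorphism carries $\wt\gm$ to a point $\wt{\gm}'$ with $[\wt{\gm}']=[\wt\gm]$, and therefore $\un{b}(x)^+_{w,\br}([\wt\gm])$ is $\C{L}G$-invariant as well. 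Since all these quantities, as well as $2\dt_{w,\br}+a^+_{w,\br}$, are also unaffected by extension of the base field, the validity of the equality in \rco{generic1}(a), and likewise of the conclusion of \rco{generic1}(b), for a given $\gm$ depends only on the image of $\gm$ in $\un{[\C{L}G\bs\CN_{w,\br}]}$.

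Next I would reduce an arbitrary $\gm\in{}^x\CN_{w,\br}$ to a point of ${}^x\ft_{w,\br}$. Recall (\re{strategy}(d)) that the embedding $\ft_{w,\br}\hra\CN_{w,\br}$ of~\re{cons}(a) induces a surjection $\ft_{w,\br}\to\un{[\C{L}G\bs\CN_{w,\br}]}$: thus, after a field extension, there is $g\in\C{L}G$ with $z'':=\Ad_g(\gm)\in\ft_{w,\br}$, and then $\C{L}\chi(z'')=\C{L}\chi(\gm)$ lies in ${}^x\fc_{w,\br}=\C{L}\chi({}^x\ft_{w,\br})$. Choose $z\in{}^x\ft_{w,\br}$ with $\C{L}\chi(z)=\C{L}\chi(z'')$; since $z$ and $z''$ are two elements of $\ft_{w,\br}$ with the same image under $\C{L}\chi$, they are $\C{L}G$-conjugate (after possibly a further field extension they differ by an element of the relevant finite Weyl group of $T_w$, so one conjugates them by a representative of that element in $N_{\C{L}G}(\C{L}(T_w))$). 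Hence $\gm$ is $\C{L}G$-conjugate to the point $z\in{}^x\ft_{w,\br}$.

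Finally I would conclude: transporting along the isomorphism $\St^x_\gm\isom\St^x_z$ and using the invariance of the second paragraph, \rco{generic1}(a) applied to $z\in{}^x\ft_{w,\br}$ gives \rco{generic}(a), and \rco{generic1}(b) applied to $z$ gives \rco{generic}(b). I expect this argument to be essentially formal; the only step that is not pure bookkeeping with $\C{L}G$-equivariance is the claim used in the third paragraph that two elements of $\ft_{w,\br}$ with equal image under the finite \'etale map $\C{L}\chi\colon\ft_{w,\br}\to\fc_{w,\br}$ become $\C{L}G$-conjugate after a field extension (equivalently, that the fibers of this map lie in a single $N_{\C{L}G}(\C{L}(T_w))$-orbit), which is where some care with the twisted torus $T_w$ and its Weyl group is needed; everything else is routine.
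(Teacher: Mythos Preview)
Your approach is correct and is essentially the same as the paper's: reduce to \rco{generic1} by conjugating $\gm$ into ${}^x\ft_{w,\br}$ via the $\C{L}G$-action, using $\C{L}G$-equivariance of all the quantities involved. The paper's route is slightly more direct than yours: rather than first conjugating $\gm$ to some $z''\in\ft_{w,\br}$ and then separately arguing that $z''$ is conjugate to a point $z\in{}^x\ft_{w,\br}$, the paper observes that by the very definition ${}^x\CN_{w,\br}=\C{L}\chi^{-1}({}^x\fc_{w,\br})$ with ${}^x\fc_{w,\br}=\C{L}\chi({}^x\ft_{w,\br})$, there is directly a $\gm'\in{}^x\ft_{w,\br}$ with $\C{L}\chi(\gm')=\C{L}\chi(\gm)$, and then invokes the standard fact that two regular semisimple elements of $\C{L}\g$ with the same Chevalley image are $\C{L}G$-conjugate. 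This bypasses your intermediate $z''$ and hence the specific worry you flag about fibers of $\ft_{w,\br}\to\fc_{w,\br}$ lying in a single $N_{\C{L}G}(\C{L}(T_w))$-orbit; the underlying input (conjugacy of regular semisimple elements with equal $\C{L}\chi$-image) is the same, just applied once rather than twice.
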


\begin{proof}
  By construction, for every $\gm\in {}^x\CN_{w,\br}$ there exists $\gm'\in {}^x\ft_{w,\br}$ such
  that $\C{L}\chi(\gm')=\C{L}\chi(\gm)$. Thus there exists $g\in \C{L}G$ such that $\Ad_g(\gm)=\gm'$.
  Then $g$ induces an isomorphism $g\colon \St^{x}_{\gm}\isom \St^{x}_{\gm'}$, thus
  $\dim_{\wt{\gm}}\St^x_{\gm}=\dim_{g(\wt{\gm})}\St^x_{\gm'}$. Since $[g(\wt{\gm})]=[\wt{\gm}]$ (see~\re{notproj}) and
  the corresponding assertions for $\St^x_{\gm'}$ were shown in~\rco{generic1}, the assertion for
  $\St^x_{\gm}$ follows.
\end{proof}

As a particular case, we deduce Lusztig conjecture~\re{conj}(b) for generic elements.

\begin{Thm} \label{T:lusztig}
Assume that the class $[w,\br]\in\fP$ is minimal. Then for every $\gm\in {}^x\CN_{w,\br}$, the fiber $\St_{\gm}^x$ satisfies

\quad $\bullet$ $\dim \St_{\gm}^x=2\dt_{w,\br}$, if $\pi(x)=[w,\br]$.

\quad $\bullet$ $\dim \St_{\gm}^x<2\dt_{w,\br}$,  if $\pi(x)\neq [w,\br]$.
\end{Thm}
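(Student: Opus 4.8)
The plan is to deduce~\rt{lusztig} as an essentially formal corollary of~\rco{generic}, using the minimality criterion of Remark~\re{remmin} and the dichotomy for $\un{b}(x)^+_{w,\br}$ in~\rl{basic}(a). First I would record that, since $[w,\br]$ is assumed minimal, Remark~\re{remmin} gives $a^+_{w,\br}=0$; hence the equality in~\rco{generic}(a) becomes
\[
\dim_{\wt{\gm}}\St^x_{\gm}=2\dt_{w,\br}-\un{b}(x)^+_{w,\br}([\wt{\gm}])
\]
for every $\gm\in{}^x\CN_{w,\br}$ and every point $\wt{\gm}\in\St^x_{\gm}$. All the substantive content sits in~\rco{generic}, which in turn rests on~\rp{dimform}; the argument below is just bookkeeping of the two error terms $a^+_{w,\br}$ and $\un{b}(x)^+_{w,\br}$.

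In the case $\pi(x)=[w,\br]$ I would simply invoke~\rco{generic}(b), which states directly that $\St^x_{\gm}$ is non-empty and equidimensional of dimension $2\dt_{w,\br}$; this is the first bullet. (If one prefers to argue from the displayed formula: here $\un{b}(x)^+_{w,\br}\equiv 0$ by~\rl{basic}(a), so every point of $\St^x_{\gm}$ has local dimension $2\dt_{w,\br}$, and non-emptiness is the one point needing~\rl{dimfib}(a),(d), as already handled inside the proof of~\rco{generic1}(b).)

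In the case $\pi(x)\neq[w,\br]$ I would use~\rl{basic}(a) to conclude that $\un{b}(x)^+_{w,\br}$ is strictly positive on all of $\C{I}(x)^+_{w,\br}$, so that $\un{b}(x)^+_{w,\br}([\wt{\gm}])\geq 1$ for every $\wt{\gm}\in\St^x_{\gm}$. Substituting into the displayed formula gives $\dim_{\wt{\gm}}\St^x_{\gm}\leq 2\dt_{w,\br}-1$ at every point; taking the supremum over $\wt{\gm}$ and adopting the convention $\dim\emptyset=-\infty$ from~\rl{dimfib}(c), this yields $\dim\St^x_{\gm}<2\dt_{w,\br}$, which is the second bullet.

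There is no genuine obstacle at this stage: once~\rp{dimform} and its corollaries are in place, the proof is a two-line case distinction. If I had to name the delicate point, it is only making sure that $\gm$ is taken in ${}^x\CN_{w,\br}$ (so that~\rco{generic} applies) and that the positivity in~\rl{basic}(a) is used as a pointwise statement about the function $\un{b}(x)^+_{w,\br}$ rather than merely about its generic value.
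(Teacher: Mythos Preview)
Your proposal is correct and matches the paper's own proof essentially line for line: the paper also sets $a^+_{w,\br}=0$ via Remark~\re{remmin}, handles $\pi(x)=[w,\br]$ by directly citing~\rco{generic}(b), and handles $\pi(x)\neq[w,\br]$ by combining $\un{b}(x)^+_{w,\br}>0$ from~\rl{basic}(a) with~\rco{generic}(a) to get the pointwise strict inequality. Your additional remarks about the convention $\dim\emptyset=-\infty$ and the pointwise nature of~\rl{basic}(a) are accurate refinements but not needed beyond what the paper records.
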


\begin{proof}
  Since $[w,\br]$ is minimal, we have $a^+_{w,\br}=0$ (see~Remark~\re{remmin}). If
  $\pi(x)\neq[w,\br]$, we have $\un{b}(x)^+_{w,\br}>0$ (by~\rl{basic}(a)).
  Then~\rco{generic}(a) implies that $\dim_{\wt{\gm}} \St_{\gm}^x<2\dt_{w,\br}$ for every
  $\wt{\gm}\in\St_{\gm}^x$, thus $\dim \St_{\gm}^x<2\dt_{w,\br}$. The assertion for $\pi(x)=[w,\br]$
  follows from~\rco{generic}(b).
\end{proof}

For completeness, we now deduce Lusztig conjectures~\re{conj}(c),(d) from~\rt{lusztig}.

\begin{Cor} \label{C:lusztig}
(a) For every $w\in W_{\fin}$, the preimage $\ov{\pi}^{-1}([w])$ is non-empty.

(b) Moreover, if $G$ is semisimple,
then $\ov{\pi}^{-1}([w])$ if finite if and only if $w$ is elliptic.
\end{Cor}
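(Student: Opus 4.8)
The plan is to deduce the whole Corollary from~\rt{min} and~\rt{lusztig} by passing to a single, maximally generic regular semisimple element of the stratum and then reading off finiteness from its affine Springer fiber.

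\textbf{Reduction and part (a).} First I would combine~\rt{min}, which says $\pi$ takes values in $\fP_{\min}$, with~\re{min}(c), which says that for each $w\in W_\fin$ there is a \emph{unique} class $[w,\br]\in\fP_{\min}$ with conjugacy class $[w]$; this gives $\ov{\pi}^{-1}([w])=\pi^{-1}([w,\br])$. Next, let $\gm\in\CN_{w,\br}$ be the image of the generic point of the irreducible scheme $\ft_{w,\br}$ under the embedding $\ft_{w,\br}\hra\CN_{w,\br}$ of~\re{cons}(a). Since $\C{L}\chi\colon\ft_{w,\br}\to\fc_{w,\br}$ is finite \'etale between irreducible schemes, $\C{L}\chi(\gm)$ is the generic point of $\fc_{w,\br}$, hence lies in the dense open ${}^x\fc_{w,\br}$ simultaneously for \emph{every} $x\in W$; thus $\gm\in{}^x\CN_{w,\br}$ for all $x$. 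As $\gm$ is regular semisimple, $\Fl_\gm$ is a non-empty scheme equidimensional of dimension $\dt_{w,\br}$ (see~\re{main}(d) and~\cite{KL}), so $\St_\gm=\Fl_\gm\times\Fl_\gm$ is non-empty and equidimensional of dimension $2\dt_{w,\br}$. Applying~\rt{lusztig} to each $x\in W$ (legitimate since $[w,\br]$ is minimal and $\gm\in{}^x\CN_{w,\br}$) I obtain
\[
\si(\gm)=\{x\in W\mid\dim\St^x_\gm=2\dt_{w,\br}\}=\{x\in W\mid\pi(x)=[w,\br]\}=\ov{\pi}^{-1}([w]).
\]
Part (a) is then immediate: the generic point of any irreducible component of the non-empty equidimensional scheme $\St_\gm$ lies in some stratum $\St^x_\gm$, which is then of full dimension $2\dt_{w,\br}$, so $\si(\gm)=\ov{\pi}^{-1}([w])\neq\emptyset$.

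\textbf{Part (b).} By the alternative description in~\re{main}(e), $\si(\gm)$ is the set of relative positions of generic points of pairs of irreducible components of $\Fl_\gm$. If $w$ is elliptic then, $G$ being semisimple, the lattice $\La_w=X_*(T_w)$ of~\re{cons}(d) vanishes, so by~\rl{propSt}(d) the projection $(\wt{\CN}_{\ft,w,\br})_{\red}\to\ft_{w,\br}$ is finitely presented; its fibre over the generic point of $\ft_{w,\br}$ is $(\Fl_\gm)_{\red}$, hence $\Fl_\gm$ is of finite type, has finitely many irreducible components, and $\si(\gm)$ is finite. If $w$ is not elliptic then $\La_w\neq 0$, and I would fix an irreducible component $C_0$ of $\Fl_\gm$ (quasi-compact, since it lies in a single finite-dimensional Schubert variety) with generic point $[g_0]$, and look at the translates $\la\cdot C_0$ for $\la\in\La_w$ under the action of $\La_w\hookrightarrow\C{L}(T_w)$, which preserves $\Fl_\gm$ because $\C{L}(T_w)$ fixes $\gm$ (see~\re{cons}(a)). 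Each $\la\cdot C_0$ is again a component, and the relative position $\xi(\la)$ of the generic point of $C_0\times(\la\cdot C_0)$ lies in $\si(\gm)$; comparing gallery distances in the Bruhat--Tits building of $\C{L}G$ and using that $\la(t)\in\C{L}(T_w)$ acts there as a translation of length $\|\la\|$ for a fixed $W_\fin$-invariant Euclidean norm, one gets $\ell(\xi(\la))\geq\|\la\|-D_0$ for a constant $D_0$ depending only on $C_0$. Since $\|\la\|\to\infty$ along $\La_w$, the set $\{\xi(\la)\mid\la\in\La_w\}\subseteq\si(\gm)=\ov{\pi}^{-1}([w])$ is infinite.

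\textbf{Expected main obstacle.} Everything except the last case is a formal consequence of~\rt{min} and~\rt{lusztig} together with the basic properties of affine Springer fibers. The real point is the non-elliptic direction of (b): it is immediate that $\Fl_\gm$ has infinitely many components, but one must genuinely show that their pairwise relative positions are unbounded. I expect to establish this either by the building/translation-length estimate above, or, dually, by arguing that if $\si(\gm)$ were finite then every component of $\Fl_\gm$ would be forced into a fixed finite union of translates of Schubert cells, so $\Fl_\gm$ would be of finite type and $w$ elliptic by~\cite{KL} --- this identification of the ``infinite'' direction with the building geometry is the step requiring actual work rather than bookkeeping.
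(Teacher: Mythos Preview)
Your argument is correct and follows essentially the same route as the paper: reduce via \rt{min} and \re{min}(c) to the unique minimal class $[w,\br]$, choose $\gm$ generic enough to lie in ${}^x\CN_{w,\br}$ for all $x\in W$ simultaneously, apply \rt{lusztig} to identify $\ov{\pi}^{-1}([w])=\si(\gm)$, and then read off (a) and (b) from properties of $\Fl_\gm$ and the $\La_w$-action.

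The only noteworthy difference is in the non-elliptic direction of (b). The paper translates a generic point $(\ov{\gm},g_1,g_2)$ of $\St_{\ov{\gm}}$ by $(1,\la)$ and simply records the ``observation'' that $\{[g_1^{-1}\la g_2]\}_{\la\in\La_w}\subset W$ is infinite; you instead pair $C_0$ with $\la\cdot C_0$ and bound $\ell(\xi(\la))$ below by a building/translation-length estimate. These are the same idea (your setup is the paper's with $g_1=g_2=g_0$), and both ultimately rest on the proper discontinuity of the $\La_w$-action on $\Fl_\gm$ encoded in \rl{propSt}(d). Your alternative suggestion in the final paragraph --- that finiteness of $\si(\gm)$ would trap $\Fl_\gm$ in a finite-type subscheme --- is in fact the cleanest way to justify the paper's ``observation'', and avoids having to verify that $\la(t)$ for $\la\in X_*(T_w)$ (a possibly non-split torus) acts on the building with translation length $\|\la\|$.
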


\begin{proof}
Choose the unique function $\br:R\to\B{Q}_{>0}$ such that the GKM pair $(w,\br)$ is minimal (see~\re{min}(c)). Then, by~\rt{min}, we have an equality $\ov{\pi}^{-1}([w])=\pi^{-1}([w,\br])$. Next we recall that the GKM stratum $\fc_{w,\br}$ is irreducible (see~\cite[3.3.4]{BKV}),
and choose a geometric point $\ov{\gm}\in\CN_{w,\br}$ whose image
$\C{L}\chi(\ov{\gm})\in \fc_{w,\br}$ is supported at a generic point. Then $\ov{\gm}\in {}^x\CN_{w,\br}$ for every $x\in W$.

Since $\St_{\ov{\gm}}$ is equidimensional of dimension $2\dt_{w,\br}$, it follows from~\rt{lusztig}
that the preimage $\ov{\pi}^{-1}([w])=\pi^{-1}([w,\br])$ consists of all $x\in W$ such that $\St^x_{\ov{\gm}}$ contains a generic point (of some irreducible component) of $\St_{\ov{\gm}}$. From this both assertions follow:

(a) Let $\wt{\gm}\in\St_{\ov{\gm}}$ be a generic point of  $\St_{\ov{\gm}}$. Then $\wt{\gm}\in \St^x_{\ov{\gm}}$ for some $x\in W$, which by the observation above implies that $x\in \ov{\pi}^{-1}([w])$.

(b) Let now $G$ be semisimple, and assume that $w$ is elliptic. Then the affine Springer fiber $\Fl_{\ov{\gm}}$, and hence also the affine Steinberg fiber   $\St_{\ov{\gm}}=\Fl_{\ov{\gm}}\times\Fl_{\ov{\gm}}$ has finitely many generic points, which implies that
$\ov{\pi}^{-1}([w])$ is finite.

Assume now that $w$ is not elliptic, thus $\La_w\neq 0$. Choose a generic point $\wt{\gm}=(\ov{\gm},g_1,g_2)\in\St_{\ov{\gm}}$ of
$\St_{\ov{\gm}}$. Then for every $\la\in \La_w$, the translate $\wt{\gm}_{\la}:=(1,\la)(\wt{\gm})=(\ov{\gm},g_1,\la g_2)$ is a generic point of $\St_{\ov{\gm}}$ as well, and $\wt{\gm}_{\la}\in \St^{x_{\la}}_{\ov{\gm}}$, where $x_{\la}$ be the class $[g_1^{-1}\la g_2]\in I\bs\C{L}G/I=W$.
Thus the assertion follows from the observation that the set $\{x_{\la}\}_{\la\in\La_w}\subset W$ is infinite.
\end{proof}

\section{Proof of \rp{dimform}}
\begin{Emp} \label{E:step1}
By the additivity of the dimension function (see \re{propdim}(a)), it suffices to show equalities
\begin{equation} \label{Eq:dim1}
\un{\dim}((\wt{\CN}_{\ft,w,\br})_{\red}/\ft_{w,\br})=\dt_{w,\br}
\end{equation}
\quad and
\begin{equation} \label{Eq:dim2}
\un{\dim}((\St^x_{\ft,w,\br})_{\red}/(\wt{\CN}_{\ft,w,\br})_{\red})=\dt_{w,\br}+a^+_{w,\br}-(\un{b}(x)^+_{w,\br}|_{(\St^x_{\ft,w,\br})_{\red}}).
\end{equation}

Applying~\rl{dimfib}(b) to the projection $f\colon [\La_w\bs(\wt{\CN}_{\ft,w,\br})_{\red}]\to\ft_{w,\br}$,
 equality~\form{dim1} will follow if we show that $f$ is open, and all the fibers of $f$ are
equidimensional of dimension $\dt_{w,\br}$. While the first assertion was proved
in~\cite[Corollary 4.3.4(c)]{BKV} (compare the proof of~\rp{flat} below), the second one follows from the fact for every
$\gm\in \ft_{w,\br}$ we have $f^{-1}(\gm)_{\red}\simeq [\Fl_{\gm}/\La_w]$ and formula~\cite{Be}
for the dimension of affine Springer fibers.
\end{Emp}

\begin{Emp} \label{E:step2}
To show equality \form{dim2}, notice that we have a Cartesian diagram
\begin{equation} \label{Eq:basic CD}
\begin{CD}
\St_{\ft,w,\br}^x @>\psi''_{w,\br}>> [\C{L}G\bs\St_{w,\br}^x] @>\sim>>  [I\bs\C{Y}_{w,\br}^x] @>\sim>>  [I(x)\bs\C{I}(x)^+_{w,\br}]\\
@V\pr_{\ft}VV @V[\pr]VV @V p^x VV @VVV\\
\wt{\CN}_{\ft,w,\br} @>\psi'_{w,\br}>>  [\C{L}G\bs\wt{\CN}_{w,\br}] @>\sim>> [I\bs\C{I}^+_{w,\br}] @= [I\bs\C{I}^+_{w,\br}],
\end{CD}
\end{equation}
where the middle horizontal isomorphisms are those of~\rl{propSt}(a), and right top horizontal isomorphism is that of~\rl{basic}(b).  Using~\rl{dim}, we therefore conclude that
\begin{equation} \label{Eq:dimpr1}
\un{\dim}_{[\pr]}=\un{\dim}_{p^x}= \dt_{w,\br}+a^+_{w,\br}-\un{b}(x)^+_{w,\br}.
\end{equation}
\end{Emp}

\begin{Emp} \label{E:step3}
Consider commutative diagram
\begin{equation} \label{Eq:basic CD2}
\begin{CD}
(\St_{\ft,w,\br}^x)_{\red} @>\psi''_{w,\br,\red}>> [\C{L}G\bs\St_{w,\br}^x]_{\red}\\
@V\pr_{\ft,\red}VV @V[\pr]_{\red} VV \\
(\wt{\CN}_{\ft,w,\br})_{\red} @>\psi'_{w,\br,\red}>> [\C{L}G\bs\wt{\CN}_{w,\br}]_{\red}\\
@VVV @VVV\\
\ft_{w,\br} @>\psi_{w,\br}>> [\C{L}G\bs \CN_{w,\br}]_{\red}.
\end{CD}
\end{equation}
Combining \form{dimpr1} with \re{propdim}(b), it suffices to show the equality  $\un{\dim}_{\pr_{\ft,\red}}=(\psi''_{w,\br,\red})^*(\un{\dim}_{[\pr]_{\red}})$.
\end{Emp}

\begin{Emp} \label{E:step4}
Since $\ft_{w,\br}$ is reduced, it follows from identification $(\C{X}_{\red}\times_{\C{Y}_{\red}}\C{Z}_{\red})_{\red}\simeq (\C{X}\times_{\C{Y}}\C{Z})_{\red}$ (see~\cite[1.4.1(e)]{BKV}) and identities \form{fiberproduct} that the bottom inner square and the exterior
square of \form{basic CD2} are Cartesian. Therefore the top inner square of \form{basic CD2}
is Cartesian as well. Next, by~\cite[Corollary 4.1.12]{BKV}, the projection $\psi_{w,\br}:\ft_{w,\br}\to [\C{L}G\bs \CN_{w,\br}]_{\red}$ is a placid atlas. Therefore its pullbacks $\psi'_{w,\br,\red}$ and $\psi''_{w,\br,\red}$ are placid atlases as well, thus the assertion of \re{step3}
follows from the definition of the dimension function in \re{dimplst}(c).
\end{Emp}
%
%

\section{Flatness conjecture}

\begin{Conj} \label{C:flat}
For every $x\in W$ and $[w,\br]\in\fP$, we have either $\C{I}(x)^+_{w,\br}=\emptyset$, or the projection
$\C{I}(x)^+_{w,\br}\to\fc_{w,\br}$ is faithfully flat.
\end{Conj}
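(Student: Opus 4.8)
The plan is to reduce the assertion to the flatness of a finitely presented morphism of finite-type $\fk$-schemes over a \emph{regular} base and then to invoke ``miracle flatness''. Since $I(x)$ is strongly pro-smooth, flatness of $\C{I}(x)^+_{w,\br}\to\fc_{w,\br}$ is equivalent to flatness of $[I(x)\bs\C{I}(x)^+_{w,\br}]\to\fc_{w,\br}$; by~\rl{basic}(b),~\rl{propSt}(a), the identification~\form{fiberproduct}, the Cartesian diagrams of~\re{step2} and~\re{step4}, and the fact that $\C{L}\chi\colon\ft_{w,\br}\to\fc_{w,\br}$ is finite \'etale while $\psi_{w,\br}$ is a placid atlas (see~\cite[3.3.4(c), Corollary 4.1.12]{BKV}), this is equivalent to flatness of $(\St^x_{\ft,w,\br})_{\red}$ over $\ft_{w,\br}$, hence, by~\re{remschemes}, to flatness of the finitely presented morphism $g\colon Z:=[\La'_w\bs(\St^x_{\ft,w,\br})_{\red}]\to\ft_{w,\br}$ of finite-type $\fk$-schemes --- this $g$ being the morphism whose relative dimension function $\un{\dim}_g=2\dt_{w,\br}+a^+_{w,\br}-\un{b}(x)^+_{w,\br}$ was computed in~\rp{dimform}. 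Passing finally to a level of a placid presentation $\ft_{w,\br}=\lim_i(\ft_{w,\br})_i$ (each $(\ft_{w,\br})_i$ smooth over $\fk$), $g$ is pulled back from some $g_i\colon Z_i\to(\ft_{w,\br})_i$, and flatness of $g$, resp.\ Cohen--Macaulayness of $Z$, is equivalent to that of $g_i$, resp.\ of $Z_i$, the transition maps being smooth.

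Since $(\ft_{w,\br})_i$ is smooth over $\fk$, hence regular, miracle flatness (\cite[Tag 00R4]{St}) shows that $g_i$ is flat precisely when (i) $Z_i$ is Cohen--Macaulay, and (ii) $\dim_z g_i^{-1}(g_i(z))=\dim_z Z_i-\dim_{g_i(z)}(\ft_{w,\br})_i$ for every $z\in Z_i$, i.e.\ the inequality of~\rl{dimfib}(a) for $g$ is an equality at every point. By~\rp{dimform} together with~\rl{dimfib}(a),(c), condition (ii) is equivalent to the equality $\dim_{\wt\gm}\St^x_{\gm}=2\dt_{w,\br}+a^+_{w,\br}-\un{b}(x)^+_{w,\br}([\wt\gm])$ holding for \emph{every} $\gm\in\CN_{w,\br}$ and $\wt\gm\in\St^x_\gm$, not merely for $\gm$ in the open dense locus ${}^x\CN_{w,\br}$ of~\rco{generic}. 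Thus the conjecture is equivalent to the conjunction of: (i) $\C{I}(x)^+_{w,\br}$ is Cohen--Macaulay, and (ii) the fibers of $\C{I}(x)^+_{w,\br}\to\fc_{w,\br}$ are equidimensional.

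For part (ii) I would try to propagate the equidimensionality already known on the affine Springer side --- by~\re{step1}, $[\La_w\bs(\wt{\CN}_{\ft,w,\br})_{\red}]\to\ft_{w,\br}$ is open with all fibers equidimensional of dimension $\dt_{w,\br}$ --- through the projection $\pr_{\ft}$, using the $\C{L}G$-equivariance of the GKM stratification, the upper semicontinuity of $\gm\mapsto\dim\St^x_\gm$ on $\fc_{w,\br}$, a $\B{G}_m$-action preserving $\fc_{w,\br}$ (e.g.\ rescaling $\fg$ by constants) and the irreducibility of $\fc_{w,\br}$ to exclude jumps; I expect this to be feasible. The main obstacle is part (i): one must show that the GKM stratum $\C{I}(x)^+_{w,\br}$ inside $\C{I}(x)^+=\C{I}^+\cap\Ad_x\C{I}^+$, cut out by the conditions $\val(\alpha(\wt z))=\br(\alpha)$ on the roots, is Cohen--Macaulay --- equivalently, that the fibers of the Iwahori-level loop Chevalley map $\C{L}\chi\colon\C{I}(x)^+\to\C{L}^+(\fc)$ over $\fc_{w,\br}$ are Cohen--Macaulay. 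In the finite-dimensional setting the Chevalley map $\fg\to\fc$ is flat with reduced complete-intersection fibers (Kostant), but the strata $\fc_{w,\br}$ are not local complete intersections, so this does not transfer, and I do not see how to control the local structure of $\C{L}\chi$ over them with the methods of~\cite{BKV}: it is precisely here that new ideas are needed, which is why the statement remains a conjecture. (A possibly more tractable reformulation, via the valuative criterion of flatness: (i) and (ii) together amount to the assertion that for every trait $T\to\ft_{w,\br}$ the base change $(\St^x_{\ft,w,\br})_{\red}\times_{\ft_{w,\br}}T$ is $T$-torsion-free, i.e.\ has no associated point over the closed point of $T$ --- that the strata acquire no embedded component ``in the $t$-direction''.) Granting the conjecture, flatness is established; the surjectivity (``faithfully'') then follows from openness of flat finitely presented morphisms together with the $\B{G}_m$-action and irreducibility of $\fc_{w,\br}$, and the full Lusztig conjecture~\re{conj}(b) follows by repeating the arguments of~\rco{generic1},~\rco{generic} and~\rt{lusztig} with no restriction to the open dense locus ${}^x\CN_{w,\br}$.
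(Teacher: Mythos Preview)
The statement is labeled \rcn{flat} in the paper and is \emph{not proved} there: the authors verify only the special case $x=1$ (via the known flatness of $\C{I}^+\to\C{L}^+(\fc)$ from~\cite[Corollary 3.4.8]{BKV}) and then show in~\rp{flat} that the conjecture would imply the full Lusztig conjecture~\re{conj}(b). There is therefore no ``paper's own proof'' against which to compare your proposal.

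Your proposal is not a proof either, and you say so yourself: after recasting the question via miracle flatness you write that controlling the Cohen--Macaulay property ``is precisely here that new ideas are needed, which is why the statement remains a conjecture.'' On the main point, then, you and the paper agree that the assertion is open. What your write-up contains beyond this is a heuristic reformulation, together with a sketch of how the conjecture yields~\re{conj}(b); the latter is essentially the content of the paper's~\rp{flat}.

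Two technical caveats on the reformulation. First, miracle flatness is an implication, not an equivalence: over a regular base, Cohen--Macaulayness of the source plus the fiber-dimension equality implies flatness, but flatness does not force the source to be Cohen--Macaulay, so your (i)$+$(ii) is a sufficient package rather than the characterization you claim. Second, your chain of equivalences passes from $\C{I}(x)^+_{w,\br}$ to $(\St^x_{\ft,w,\br})_{\red}$, yet the Cartesian squares of~\form{basic CD2} that you invoke are only established after reduction; to transport flatness back to $\C{I}(x)^+_{w,\br}$ itself you would also need it to be reduced, which is not addressed. Finally, your sketch for (ii) is too schematic to assess: the constant $\B{G}_m$-scaling does preserve each $\fc_{w,\br}$ but has no fixed point inside it, and upper semicontinuity of $\gm\mapsto\dim\St^x_\gm$ alone controls neither the required \emph{pointwise} equality (since $\un{b}(x)^+_{w,\br}$ may itself be non-constant) nor the surjectivity needed for ``faithfully''.
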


\begin{Emp}
{\bf Example.} Note that the projection $\C{I}^+\to\C{L}^+(\fc)$ is flat (see ~\cite[Corollary 3.4.8]{BKV}), and it is known to be surjective at least when the characteristic of $\mathsf k$ is sufficiently large. Therefore Conjecture~\ref{C:flat} holds for
$x=1$. Moreover, in this case, $\C{I}(x)^+_{w,\br}\neq\emptyset$ for all $[w,\br]\in\fP$.
\end{Emp}

The following result shows that Conjecture~\ref{C:flat} implies the full
Lusztig conjecture~\re{conj}(b).

\begin{Prop} \label{P:flat}
Assume that Conjecture~\ref{C:flat} holds for a triple $(x,w,\br)$.

(a) Then for every $\gm\in\CN_{w,\br}$ and $\wt{\gm}\in\St_{\gm}^x$, we have
\[
\dim_{\wt{\gm}}\St_{\gm}^x=2\dt_{w,\br}+a^+_{w,\br}-\un{b}(x)^+_{w,\br}([\wt{\gm}]).
\]

(b) Assume that $[w,\br]\in\fP$ is minimal. Then for every $\gm\in\CN_{w,\br}$, we have
\[
\dim \St_{\gm}^x=2\dt_{w,\br},\ \operatorname{if}\ \pi(x)=[w,\br];\ \operatorname{and}\
\dim \St_{\gm}^x<2\dt_{w,\br},\ \operatorname{if}\ \pi(x)\neq [w,\br].
\]
\end{Prop}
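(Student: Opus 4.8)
\emph{Overview.}
The plan is to upgrade the ``generic'' statements~\rco{generic} and~\rt{lusztig} to \emph{all} $\gm\in\CN_{w,\br}$ by replacing the input~\rl{dimfib}(c) used in the proof of~\rco{generic1} by~\rl{dimfib}(b),(d): Conjecture~\ref{C:flat} is precisely what makes the relevant projection open (and surjective) over the whole of $\ft_{w,\br}$, not merely over a dense open. As a first step, exactly as in the proof of~\rco{generic}, it suffices to treat $\gm\in\ft_{w,\br}$: by~\re{strategy}(d) (see also~\cite[Corollary 4.1.12]{BKV}) the composition $\ft_{w,\br}\hra\CN_{w,\br}\to[\C{L}G\bs\CN_{w,\br}]$ is surjective, so every $\gm\in\CN_{w,\br}$ is $\C{L}G$-conjugate to some $\gm'\in\ft_{w,\br}$, and a conjugating element induces an isomorphism $\St^x_\gm\isom\St^x_{\gm'}$ compatible with the invariant $[\,\cdot\,]$ of~\re{notproj}(c).

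\emph{Key reformulation.}
The heart of the matter is the assertion that Conjecture~\ref{C:flat} for $(x,w,\br)$ is equivalent to flatness of the morphism of placid stacks $\ov p\colon[\C{L}G\bs\St^x_{w,\br}]\to[\C{L}G\bs\CN_{w,\br}]$, and implies its surjectivity when $\C{I}(x)^+_{w,\br}\neq\emp$. I would verify this over $\fc_{w,\br}$. First, $[\C{L}G\bs\CN_{w,\br}]\to\fc_{w,\br}$ is flat, since its composition with the placid atlas $\psi_{w,\br}$ of~\re{step4} is the finite \'etale morphism $\ft_{w,\br}\to\fc_{w,\br}$ (\cite[3.3.4(c)]{BKV}); and $[\C{L}G\bs\St^x_{w,\br}]\to\fc_{w,\br}$ is flat (resp.\ faithfully flat) if and only if Conjecture~\ref{C:flat} holds, by faithfully flat descent along the placid atlas $\C{I}(x)^+_{w,\br}\to[I(x)\bs\C{I}(x)^+_{w,\br}]$ together with the identification $[\C{L}G\bs\St^x_{w,\br}]\simeq[I(x)\bs\C{I}(x)^+_{w,\br}]$ of~\re{notproj}(a). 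Finally, over a geometric point $s$ of $\fc_{w,\br}$ the fibre $\CN_{w,\br,s}$ is a single $\C{L}G$-orbit $\C{L}G/Z$, where $Z=Z_{\C{L}G}(\gm_0)$ is the centralizer of a chosen point $\gm_0$ above $s$ --- a form of $\C{L}(T_w)$, in particular a smooth group --- and $\St^x_{w,\br,s}\simeq\C{L}G\times^{Z}\St^x_{\gm_0}$; hence the fibre of $\ov p$ over $s$ is the map $[Z\bs\St^x_{\gm_0}]\to BZ$ induced by $\St^x_{\gm_0}\to\pt$, which becomes $\St^x_{\gm_0}\to\pt$ after the atlas base change $\pt\to BZ$ and is therefore flat (a scheme over a field). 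Thus the fibrewise flatness criterion (in its placid-stack form, cf.\ \cite{BKV}) shows that $\ov p$ is flat as soon as Conjecture~\ref{C:flat} holds. Its surjectivity when $\C{I}(x)^+_{w,\br}\neq\emp$ follows from surjectivity of $\C{I}(x)^+_{w,\br}\to\fc_{w,\br}$ together with the fact that $[\C{L}G\bs\CN_{w,\br}]\to\fc_{w,\br}$ is bijective on points --- surjective as above, injective because two lifts in $\ft_{w,\br}$ of a point of $\fc_{w,\br}$ differ by the relative Weyl group $N_{\C{L}G}(\C{L}(T_w))/\C{L}(T_w)$ and are therefore $\C{L}G$-conjugate.

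\emph{Conclusion.}
Granting the reformulation, $\ov p$ is representable, locally finitely presented (by~\rl{propSt} and~\cite[Theorem 4.3.3]{BKV}) and flat, hence universally open, and surjective when $\C{I}(x)^+_{w,\br}\neq\emp$. By~\form{fiberproduct}, the projection $\St^x_{\ft,w,\br}\to\ft_{w,\br}$ is the pullback of $\ov p$ along $\psi_{w,\br}$; passing to the finitely presented schemes $f\colon[\La'_w\bs(\St^x_{\ft,w,\br})_{\red}]\to\ft_{w,\br}$ of~\re{remschemes}, we conclude that $f$ is open, and surjective when $\C{I}(x)^+_{w,\br}\neq\emp$. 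Now~\rl{dimfib}(b) applied to $f$ gives $\dim_{\wt{\gm}}\St^x_\gm=\un{\dim}_f(\wt{\gm})$ for \emph{every} $\gm\in\ft_{w,\br}$ and $\wt{\gm}\in\St^x_\gm$ (via the identifications in the proof of~\rco{generic1}(a)), not merely over the dense open ${}^x\ft_{w,\br}$; and, exactly as in the proof of~\rco{generic1}(a), \rp{dimform} (together with~\re{propdim}(b) and~\re{bex2} for the compatibility of $\un{\dim}$ with reduction and with the free $\La'_w$-action) identifies the right-hand side with $2\dt_{w,\br}+a^+_{w,\br}-\un{b}(x)^+_{w,\br}([\wt{\gm}])$. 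Together with the first reduction this proves~(a). For~(b), assume $[w,\br]$ minimal, so $a^+_{w,\br}=0$ by~\re{remmin}. If $\pi(x)\neq[w,\br]$ then $\un{b}(x)^+_{w,\br}>0$ by~\rl{basic}(a), so (a) forces $\dim_{\wt{\gm}}\St^x_\gm<2\dt_{w,\br}$ for all $\wt{\gm}$ and hence $\dim\St^x_\gm<2\dt_{w,\br}$ --- this also covers the case $\C{I}(x)^+_{w,\br}=\emp$, where $\St^x_\gm=\emp$ for all such $\gm$. If $\pi(x)=[w,\br]$ then $\un{b}(x)^+_{w,\br}=0$ by~\rl{basic}(a) and $\C{I}(x)^+_{w,\br}\neq\emp$, so $f$ is surjective, each $\St^x_\gm$ is non-empty, and (a) shows it is equidimensional of dimension $2\dt_{w,\br}$.

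\emph{Main obstacle.}
The crux is the key reformulation, and in particular the observation that it is the $\C{L}G$-quotient map $\ov p$ --- not the $I$-quotient map $[I(x)\bs\C{I}(x)^+_{w,\br}]\to[I\bs\C{I}^+_{w,\br}]$ of~\form{basic CD} --- whose flatness is controlled by Conjecture~\ref{C:flat}: passing to $\C{L}G$-orbits over a point of $\fc_{w,\br}$ turns the fibrewise map into (a form of) $\St^x_{\gm_0}\to\pt$, hence flat for free, whereas for the $I$-quotient map the fibrewise criterion would genuinely require flatness of $\St^x_{\gm_0}\to\Fl_{\gm_0}$, which can fail. A secondary, purely technical, difficulty is to carry this out entirely within the world of placid schemes and stacks --- via the reductions, the finite-index subgroup $\La'_w$, and the finiteness results of~\cite[Theorem 4.3.3 and Corollary 4.3.4]{BKV} --- so that the fibrewise flatness criterion and~\rl{dimfib} literally apply.
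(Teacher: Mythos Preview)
Your overall strategy coincides with the paper's: both reduce to showing that
\[
p\colon(\St^x_{\ft,w,\br})_{\red}\lra\ft_{w,\br}
\]
is open and surjective, and then replace the use of~\rl{dimfib}(c) in the proof of~\rco{generic1} by~\rl{dimfib}(b),(d). The divergence is entirely in how openness and surjectivity of $p$ are established.

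\smallskip

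\textbf{What the paper does.} The paper never attempts to prove flatness of any map of stacks. It first uses the free $W_{w,\br}$-action to pass from $\ft_{w,\br}$ to $\fc_{w,\br}$, and then works entirely with schemes: it builds a commutative diagram
\[
\begin{CD}
\C{X} @>>> (\St^x_{\ft,w,\br})_{\red} @>\ov{p}>> \fc_{w,\br} \\
@VVV @VVV @|\\
\C{I}(x)_{w,\br}^+ @>>> [I(x)\bs\C{I}(x)^+_{w,\br}] @>>> \fc_{w,\br},
\end{CD}
\]
in which the top-left map is an $I(x)$-torsor (hence surjective), the left vertical map is, after reduction, a pullback of the placid atlas $(\St^x_{\ft,w,\br})_{\red}\to[I(x)\bs\C{I}(x)^+_{w,\br}]_{\red}$ and thus flat, and $\C{I}(x)^+_{w,\br}\to\fc_{w,\br}$ is faithfully flat by hypothesis. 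Surjectivity of $\ov{p}$ is immediate, and universal openness is checked by the criterion ``locally finitely presented and generalizations lift along every base change'', which in turn reduces to the lifting of generalizations along the two flat scheme maps just named.

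\smallskip

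\textbf{Where your argument is incomplete.} Your route through flatness of $\ov{p}\colon[\C{L}G\bs\St^x_{w,\br}]\to[\C{L}G\bs\CN_{w,\br}]$ via a fibrewise criterion over $\fc_{w,\br}$ runs into several genuine technical issues that you do not address:
\begin{itemize}
\item The target $[\C{L}G\bs\CN_{w,\br}]$ is a quotient of an ind-scheme by $\C{L}G$; the paper only establishes that its \emph{reduction} carries the placid atlas $\ft_{w,\br}$ (see~\re{step4}). Your flatness claim for $[\C{L}G\bs\CN_{w,\br}]\to\fc_{w,\br}$, argued by descent along $\psi_{w,\br}$, therefore does not apply as stated.
\item Before reduction, the pullback of $\ov{p}$ along $\ft_{w,\br}$ is the ind-scheme $\St^x_{\ft,w,\br}$, not a scheme; so the assertion that $\ov{p}$ is ``representable, locally finitely presented'' does not follow from~\rl{propSt} as you claim (that lemma concerns $[\pr]$ to $[\C{L}G\bs\wt{\CN}_{w,\br}]$, not to $[\C{L}G\bs\CN_{w,\br}]$).
\item The ``fibrewise flatness criterion in its placid-stack form'' you invoke is not stated in~\cite{BKV}; making it precise for maps with the above representability defects is exactly the work you would need to do.
\end{itemize}
These are not fatal --- one can pass to reductions and unwind the atlases --- but once you do so you essentially recover the paper's diagram and its generalization-lifting argument. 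Your ``main obstacle'' paragraph correctly identifies the conceptual point (the $\C{L}G$-quotient, not the $I$-quotient, is the right object because over each point of $\fc_{w,\br}$ the fibre collapses to $\St^x_{\gm_0}\to\pt$); the paper exploits the same idea but implements it at the scheme level, where the flatness inputs are literally available and no stack-level criterion is needed.
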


\begin{proof}
As in~\rt{lusztig}, assertion (b) follows from (a). So it remains to show assertion (a).
If $\C{I}(x)^+_{w,\br}$ is empty, we get $[\C{L}G\bs\St^x_{w,\br}]\simeq [I(x)\bs \C{I}(x)^+_{w,\br}]=\emptyset$
(see~\re{notproj}(a)), hence $\St^x_{\ft,w,\br}$ is empty. Assume from now on that $\C{I}(x)^+_{w,\br}\to\fc_{w,\br}$ is faithfully flat.

Using~\rl{dimfib}(b), and arguing as in~\rco{generic1}(a) and~\rco{generic}(a), it suffices to show that the projection
$[\La_w\bs(\St^x_{\ft,w,\br})_{\red}]\to \ft_{w,\br}$ or, equivalently, projection $p\colon(\St^x_{\ft,w,\br})_{\red}\to \ft_{w,\br}$ is open and surjective.

To prove the assertion, we basically repeat the argument of~\cite[Proposition 4.3.1]{BKV}.
Since this proof uses a lot of terminology, which was not discussed in this work, we provide a direct
argument instead.

Choose a GKM pair $(w,\br)$ in the class $[w,\br]$, and let $W_{w,\br}\subset W_{\fin}$ be the stabilizer of $(w,\br)$. Then $W_{w,\br}$ acts freely on $\ft_{w,\br}$ and induces an isomorphism
$[W_{w,\br}\bs\ft_{w,\br}]\simeq\fc_{w,\br}$ (see~\cite[3.3.4(d)]{BKV}). Moreover, the projection
$\ft_{w,\br}\to[\C{L}G\bs\CN_{w,\br}]$ is $W_{w,\br}$-invariant. Therefore the projection $p$ is $W_{w,\br}$-equivariant,
so it suffices to show that the composition $\ov{p}\colon(\St^x_{\ft,w,\br})_{\red}\overset{p}{\to} \ft_{w,\br}\to\fc_{w,\br}$ is universally open and surjective.

Consider commutative diagram
\begin{equation} \label{Eq:last CD}
\begin{CD}
\C{X} @>(2)>> (\St_{\ft,w,\br})_{\red} @>\ov{p}>> \fc_{w,\br} \\
@V(4)VV @V(3)VV @|\\
\C{I}(x)_{w,\br}^+ @>(1)>> [I(x)\bs\C{I}(x)^+_{w,\br}] @>>> \fc_{w,\br},
\end{CD}
\end{equation}
where the left square is Cartesian, and morphism $(1)$ is the projection, and morphism $(3)$ is induced by the top horizontal arrow
of~\form{basic CD}.

As it was mentioned in \re{step4}, the map $(3)_{\red}\colon (\St_{\ft,w,\br})_{\red}\to [I(x)\bs\C{I}(x)^+_{w,\br,\red}]$
is a placid atlas. Therefore the map $(3)$ is surjective, so surjectivity of $\ov{p}$ follows from that of $\C{I}(x)_{w,\br}^+\to\fc_{w,\br}$.

Next, since $\ov{p}$ is locally finitely presented, in order to show that it is universally open, it suffices to show that
generalizations lift along every base change of $\ov{p}$ (see~\cite[Tag 01U1]{St}).

Since $(2)$ is a pullback of $(1)$, it is an $I(x)$-torsor. In particular, $\C{X}$ is a scheme, and the map $(2)$ is surjective.
Thus, using the commutativity of \form{last CD}, it suffices to show that generalizations lift along every base change of maps $(4)$ and
$\C{I}(x)_{w,\br}^+\to\fc_{w,\br}$.

Since $(4)_{\red}\colon \C{X}\to  \C{I}(x)^+_{w,\br,\red}$ is a pullback of $(3)_{\red}$, it is a smooth atlas. Thus $(4)_{\red}$ is flat, so both assertions follow from the fact that generalizations lift along flat morphisms of schemes (see~\cite[Tag 03HV]{St}).
\end{proof}

%

\end{document}